\documentclass[11pt,a4paper]{amsart}
\usepackage[margin=3cm]{geometry}
\usepackage{glossaries}
\usepackage{caption}
\usepackage{graphicx}
\usepackage{amsmath}
\usepackage{amssymb}
\usepackage{physics}
\usepackage{enumerate}
\usepackage[utf8]{inputenc}
\usepackage{amsthm}
\usepackage{mathrsfs}
\usepackage{mathtools}
\usepackage{chngcntr}
\usepackage{xcolor}
\usepackage{bbm}
\usepackage{url}
\usepackage{xfrac}
\usepackage[foot]{amsaddr}
\usepackage[colorlinks]{hyperref}
\usepackage{nicematrix}

\numberwithin{equation}{section}

\allowdisplaybreaks

\newtheorem{thm}{Theorem}[section]
\newtheorem{lem}[thm]{Lemma}
\newtheorem{prop}[thm]{Proposition}
\newtheorem{cor}[thm]{Corollary}

\newtheorem{theorem}{Theorem}

\theoremstyle{remark}
\newtheorem{remark}{Remark}

\theoremstyle{definition}
\newtheorem*{definition}{Definition}

\begin{document}
	
	\title{Finite Riesz products and Ornstein non-inequalities on quantum tori}
	
	\author[C.P. Tantalakis]{Christos P. Tantalakis$^{1,\dagger}$}
	\author[M. Wojciechowski]{Micha\l{} Wojciechowski$^{2,\ddagger}$}
	
	\address{\noindent $^{1}$Faculty of Mathematics, Informatics, and Mechanics\\
		University of Warsaw\\ Warsaw, Poland
		\newline $^{2}$Institute of Mathematics\\ Polish Academy of Sciences\\ Warsaw, Poland}
	
	\email{$^\dagger$c.tantalakis@uw.edu.pl, $^\dagger$chritant@live.com}
	\email{$^{\ddagger}$miwoj@impan.pl}
	
	\thanks{The authors gratefully acknowledge the financial support from NCN (Project 2020/02/Y/ST1/00072)}
	
	\begin{abstract}
		We demonstrate a construction of products on the quantum torus $\mathbb{T}_\theta^2$ that generalises the usual construction of finite Riesz products on the commutative torus $\mathbb{T}^2$. We explain why the former constitutes a natural analogue of the latter in the non-commutative setting and, based on this construction, as well as on previous results by K. Kazaniecki and the second author, we prove a non-commutative version of an Ornstein non-inequality.
	\end{abstract}
	
	\subjclass[2020]{Primary 46L51, 46L52, Secondary: 42A55}
	
	\keywords{Quantum torus, rotation algebras, Riesz products, Ornstein non-inequalities}
	
	\maketitle
	
	\section{Introduction}\label{Intro}
	
	\subsection{Aim of the article and main techniques}
	
	The article deals with non-commutative analogues of Ornstein's non-inequalities on the the quantum torus. In \cite{KazWojc}, Kazaniecki and the second author used finite Riesz products to construct examples that confirm Ornstein's results. In this article, we show that an analogous finite Riesz product structure provides us with examples of the Ornstein phenomenon for the case of the quantum torus. To the best of the authors' knowledge, Ornstein's non-inequalities and analogues of Riesz products on the non-commutative torus have not yet been considered and one of the main goals of the present article is to initiate such a study.
	
	We recall that Ornstein's celebrated result states that if $\{P(D),Q_1(D),\dots,Q_N(D)\}$ is a collection of homogeneous linear differential operators of the same homogeneity degree which satisfy
	\begin{equation}\label{00}
		\left\| P(D)f \right\|_1\leq C \sum_{j=1}^N \left\| Q_j(D)f \right\|_1, \ \text{for all } f\in C^{\infty}_c(\mathbb{R}^n),
	\end{equation}
	for some positive constant $C$ (that depends on the choice of the differential operators), then the operator $P(D)$ lies in the linear span of $\{Q_j(D)\}_{j=1}^N$ (cf. \cite[Theorem 1]{Ornstein}). As a matter of fact, in \cite{Ornstein}, Ornstein first proves the following simple case in $\mathbb{R}^2$. There does not exist a universal constant $C>0$ such that
	\begin{equation}\label{Ornstein}
		\iint\limits_{[-1,1]^2} \left| \frac{\partial^2f}{\partial x\partial y}(x,y) \right|\dd{x}\dd{y}\leq C \displaystyle{\left( \iint\limits_{[-1,1]^2} \left| \frac{\partial^2f}{\partial x^2}(x,y) \right|\dd{x}\dd{y} + \iint\limits_{[-1,1]^2} \left| \frac{\partial^2f}{\partial y^2}(x,y) \right|\dd{x}\dd{y} \right)},
	\end{equation}
	for any infinitely differentiable function $f$ on the square $[-1,1]^2$ which is annihilated outside this square. We note that (\ref{Ornstein}) is disproved in the first part of \cite{Ornstein}, while the analysis of (\ref{00}) is much deeper and is presented in the second part of his article.
	
	In this article, we define and construct finite Riesz products on the non-commutative torus that violate a non-commutative analogue of (\ref{Ornstein}) (see Theorem \ref{main}). Furthermore, we stress that our results generalise to a wider (compared to the simple 2-dimensional aforementioned case) context according to Theorem \ref{thm0}. Observe that the latter constitutes a non-commutative analogue of the anisotropic case that is discussed in \cite{KazWojc}. Notice that another contribution to Ornstein non-inequalities (for the commutative case) can be also found in \cite{KazStolW}.
	
	The key ingredient to obtain the non-commutative version of Ornstein's non-inequality is an elementary continuity property of the non-commutative $L^1$-norm of trigonometric polynomial sections of the rotation algebras, whose Fourier coefficients depend continuously on the deformation parameter $\theta$ (see Lemma \ref{lem0}). Therefore, the key idea of our proof is to construct small perturbations of $\theta$ in order to get our estimates from the commutative case (i.e. $\theta=0$), that was examined by Kazaniecki and the second author in \cite{KazWojc}.
	
	For completeness of the argument, observe that when $1<p<+\infty$, under certain conditions, an inequality of the form (\ref{00}) (in the commutative case) is possible without implying the linear dependence of the Ornstein case. This can be proved by using, for example, the Mikhlin-H\"{o}rmander multiplier theorem (cf. \cite[Theorem 6.2.7]{Grafakos}). A similar result (when $p\in(1,+\infty)$) for the non-commutative torus is also obtainable. Because a concrete relevant reference remains unknown to the authors, we give a sketch of the approach. In order to tackle the question when $p\in(1,+\infty)$ for the non-commutative torus, the Mikhlin-H\"{o}rmander multiplier theorem yields not only that the involved Fourier multipliers are $L^p$-bounded on the commutative torus $\mathbb{T}^d$, but also completely bounded (a stronger notion than mere boundedness). The latter is a folklore result that can be found in various sources (e.g. \cite{MRX}). The result is finally obtained by applying \cite[Theorem 7.3]{HAoQT}. This important theorem states that the space of completely bounded $L^p$ multipliers on the non-commutative torus is actually the same with the space of completely bounded $L^p$ multipliers on the commutative torus. Regarding the case of $p=+\infty$, the commutative case is treated in \cite{LeeuwMirkil}, where it is proved that there does not exist a collection of homogeneous (of the same homogeneity degree) linearly independent differential operators that satisfy (\ref{00}). In other words, the Ornstein phenomenon occurs in the case of $p=+\infty$ too. The authors believe that the non-commutative analogue of the $p=+\infty$ case can be obtained by reasoning similarly to the $p=1$ case and will be discussed in another article. For completeness, we mention that in the commutative case, when dealing with these kinds of problems, apart from Fourier multipliers, one can use Calderon-Zygmund theory, Sobolev embeddings, etc. These notions possess natural analogues in the setting of quantum tori too. Apart from \cite{HAoQT} and \cite{MRX}, useful information on all of the aforementioned topics can be found also in \cite{JMP}, \cite{Parcet}, \cite{Ricard}, \cite{Spera}, \cite{Suk}, and \cite{XXY}.
	
	\subsection{Structure of the article}
	
	Because the nature of the article lies on different areas of mathematical analysis (e.g. harmonic analysis, von Neumann algebras, operator theory, etc.), Section \ref{Intro} is purely introductory and it aims to give the absolutely necessary background material for this article in a comprehensive way, even if this could be considered to be rather trivial sometimes. The motivation for our work is the concept of Riesz products (defined on the usual commutative torus $\mathbb{T}^2$) as well as the way that they provide the machinery for obtaining an Ornstein non-inequality. The necessary definitions are given in Section \ref{RP&ONI}. Because our main goal is to develop analogues of the Ornstein phenomenon in the non-commutative world, in Section \ref{NC} we briefly discuss the concept of non-commutative integration and Section \ref{IRA} provides us with the definition of the non-commutative (or quantum) torus.
	
	In Section \ref{Prel}, we briefly present the main proving tools that we take from the commutative case. Namely, a particular construction of finite Riesz products and an Ornstein non-inequality on the commutative torus $\mathbb{T}^2$.
	
	Section \ref{NCRP} is devoted to finite non-commutative Riesz products. More precisely, in Section \ref{Def}, we introduce these products and we state our main theorem (i.e. Theorem \ref{main}). Section \ref{Lemmas} comprises the statements and the proofs of all the necessary lemmata that will finally lead to the proof of Theorem \ref{main}, which is demonstrated in Section \ref{Proof}.
	
	Finally, in Section \ref{Concl} we conclude with some instructive remarks, which could possibly motivate further research on the topic, and we also provide some multidimensional extensions of our results. In particular, Theorem \ref{thm0} provides an example of Ornstein's non-inequality for an anisotropic case, in the spirit of the results in \cite{KazWojc}.
	
	\subsection{Framework, main notions and first definitions}
	
	\subsubsection{Riesz products}\label{RP&ONI}
	
	We recall that the term \textit{Riesz product} usually refers to products of the form
	\begin{equation}\label{eq1}
		P_n(t)=\prod_{j=1}^n [1+a_j\cos(2\pi m_j t)] \ \text{and } P_0(t)=1, \ \text{for all } t\in[0,1],
	\end{equation}
	for any arbitrary natural number $n$, where $\{a_j\}_{j\in\mathbb{N}}$ is a sequence of $[-1,1]$ and $\{m_j\}_{j\in\mathbb{N}}$ is a sequence of natural numbers that satisfies the lacunary condition
	\begin{equation}\label{16}
		\frac{m_{j+1}}{m_j}\geq3, \ \text{for all } j\in\mathbb{N}.
	\end{equation}
	Obviously, any such product admits a Fourier series representation
	\begin{equation}\label{eq0}
		P_n(t)=\sum_{j=-N}^N \hat{P}_n(j)e^{2\pi i jt}, \ \text{for all } t\in[0,1],
	\end{equation}
	where $N=\sum_{j=1}^n m_j$. It is interesting to notice that condition (\ref{16}) guarantees that the non-zero Fourier coefficients $\hat{P}_n(j)$ are obtained uniquely in the following sense. For any $j\in\{1,\dots,n\}$, define the sets
	\begin{equation*}
		M_j=\left\{ m_j+\sum_{k=1}^{j-1}\xi_k m_k, \ \text{where } \xi_k\in\{-1,0,1\} \right\}.
	\end{equation*}
	Then
	\begin{equation*}
		P_n(t)=1+\sum_{j=1}^n\sum_{k\in\pm M_j}\hat{P}_n(k)e^{2\pi i kt}, \ \text{for all } t\in[0,1].
	\end{equation*}
	For any $j\in\{1,\dots,n\}$ and any $k\in M_j$, we define $\left\| k \right\|_0$ to denote the unique number of the terms of the sequence $\{m_j\}_{j\in\mathbb{N}}$ that generate $k$; i.e.
	\begin{equation}\label{k0}
		\left\| k \right\|_0=1+\sum_{l=1}^{j-1}\left|\xi_l\right|,
	\end{equation}
	for
	\begin{equation}\label{17}
		k= m_j+\sum_{l=1}^{j-1}\xi_l m_l\in M_j.
	\end{equation}
	Then, for any $j\in\{1,\dots,n\}$ and any $k\in M_j$ of the form (\ref{17}),
	\begin{equation}\label{eq2}
		\hat{P}_n(k)=2^{-\left\| k \right\|_0} a_j \prod_{l\in\Omega_k}a_l,
	\end{equation}
	where
	\begin{equation*}
		\Omega_k=\left\{ l\in\{1,\dots,j-1\}: \ \xi_l\neq0 \right\}.
	\end{equation*}
	
	Riesz products were introduced by F. Riesz in \cite{FRiesz}. For a panorama on this topic, we refer the reader to \cite[Chapter XI]{Bary}, \cite[Chapter 3.6]{Grafakos}, \cite[Chapter 7]{ECHA} and \cite[Chapter V]{Zygmund}. Of course it makes sense to consider multivariable analogues of the above in the following sense. Let d be an arbitrary natural number and consider the sequences $\{a_j\}_{j\in\mathbb{N}}$ of $[-1,1]$ and $\{m_j(k)\}_{j\in\mathbb{N}}$ of $\mathbb{N}$, for $k=1,\dots,d$. Then, for any $n\in\mathbb{N}$, one can define products of the form
	\begin{equation*}
		P_n(t)=\prod_{j=1}^n[1+a_j\cos(2\pi m_j\cdot t)], \ \text{for all } t\in[0,1]^d,
	\end{equation*}
	where, for $m_j=(m_j(1),\dots,m_j(d))$ and $t=(t_1,\dots,t_d)$, $m_j\cdot t$ denotes the usual inner product; i.e.
	\begin{equation*}
		m_j\cdot t=\sum_{k=1}^dm_j(k)t_k.
	\end{equation*}
	Lacunary conditions on the sequence $\{m_j\}_{j\in\mathbb{N}}$ can be imposed coordinate-wise. Products of the latter form appear in \cite{KazWojc}.
	
	\subsubsection{Non-commutative $L^p$-spaces}\label{NC}
	
	Recall that von Neumann algebras can be regarded as (in general) non-commutative analogues of $L^\infty$ spaces. Likewise in the commutative world, where $L^\infty$ is the dual of $L^1$, von Neumann algebras admit a predual which is considered as a non-commutative analogue of $L^1$ (cf. \cite[Theorem III.3.5]{TakI}). For a complete panorama on C$^*$ and von Neumann algebras we refer the reader to \cite{Conway}, \cite{CAbE}, \cite{KadRingI}, \cite{Pedersen} or \cite{TakI}.
	
	Under certain conditions, it is possible to proceed one step further and establish a whole non-commutative $L^p$-theory. In fact, if $\mathcal{A}$ is a von Neumann algebra with a faithful, semi-finite, normal trace $\tau$, then the (in general) \textit{non-commutative $L^p(\mathcal{A},\tau)$-space}, for $p\in(0,+\infty)$, is defined to be the space of all operators $T$ that are affiliated with $\mathcal{A}$ and
	\begin{equation*}
		\left\| T \right\|_p:=\tau\left( \left|T\right|^p \right)^{\sfrac{1}{p}}<+\infty,
	\end{equation*}
	where $\left|T\right|=\sqrt{T^*T}$. For $p=+\infty$, the \textit{non-commutative $L^\infty(\mathcal{A},\tau)$-space} comprises all the operators $T$ that are affiliated with $\mathcal{A}$ and
	\begin{equation*}
		\left\| T \right\|_\infty:=\left\| T \right\|<+\infty.
	\end{equation*}
	In other words, $L^\infty(\mathcal{A},\tau)$ is the space of bounded operators, affiliated with $\mathcal{A}$. Bear in mind that whenever the von Neumann algebra, as well as its faithful, semi-finite, normal trace $\tau$ are implicit and there is no danger of confusion, the notation $L^p(\mathcal{A},\tau)$ will be reduced for simplicity to $L^p$. Notice that, as in the commutative case, for $p\geq1$, the defined spaces are Banach (cf. \cite[Section 1.1]{FackKosaki}). As expected, those spaces are quasi-Banach for $0<p<1$, a result due to K.-S. Saito \cite{Saito}.
	
	\begin{remark}
		Notice that when an operator is bounded, affiliation is equivalent to belonging to the algebra. Because our framework will be that of bounded operators, there is no need to define the notion of affiliation. For these, as well as for more information about affiliated operators, the reader is advised to check \cite[Chapter 9]{Lectures}.
	\end{remark}
	
	Non-commutative integration in this setting was initiated by J. Dixmier \cite{Dixmier} and I. E. Segal \cite{SegalII}. Since then, those spaces have attracted significant interest with much effort to be put on examining if the basic theory and properties of the usual commutative $L^p$-spaces can be translated in the non-commutative vocabulary. For an extensive, a bit more comprehensive, panorama on non-commutative integration, we refer to \cite{FackKosaki}, \cite{Nelson}, \cite{Saito}, \cite{TakII}, and \cite{Yeadon}. Finally, for a comprehensive exposition of notions relevant to weights (or traces) and affiliated operators, apart from the aforementioned sources, we refer the reader in particular to \cite[Chapter 7.5]{KadRingII} and \cite[Chapter 5.6]{KadRingI}, respectively.
	
	\subsubsection{Rotation algebras and non-commutative/quantum tori}\label{IRA}
	
	In this section, we present the definitions of the main objects of interest in this article; i.e. rotation algebras as well as the non-commutative/quantum tori. Regarding the latter, we highlight the fact that in general do not appear in the literature as identical objects. Nonetheless, the distinction between the two terms is rather niche and does not serve our purposes so, in the context of our work, the two terms will be considered identical and are used interchangeably. In fact, this is not something unusual, since there are examples in the literature where the two terms define exactly the same object (cf. \cite{Suk}).
	
	Let $\theta$ be a number in $[0,1]$ and denote by $\mathcal{A}_\theta$ the universal C$^*$-algebra generated by two unitary elements $U_\theta$ and $V_\theta$ that satisfy the ``commutation" relation below:
	\begin{equation}\label{rotation}
		U_\theta V_\theta=e^{2\pi i\theta}V_\theta U_\theta.
	\end{equation}
	Namely, if $\mathcal{P}_\theta$ denotes the $*$-algebra that contains the non-commutative polynomials of the form
	\begin{equation}\label{18}
		p=p(U_\theta,V_\theta)=\sum_{\alpha=(\alpha_1,\alpha_2)\in\mathbf{N}}p_\alpha U_\theta^{\alpha_1}V_\theta^{\alpha_2},
	\end{equation}
	where $\mathbf{N}$ is any finite subset of $\mathbb{Z}^2$, then
	\begin{equation*}
		\mathcal{A}_\theta=\overline{\mathcal{P}_\theta},
	\end{equation*}
	where the closure is considered with respect to the norm of $\mathcal{P}_\theta$. Then $\mathcal{A}_\theta$ is a \textit{rotation algebra}. If $\theta$ is rational, then $\mathcal{A}_\theta$ is a \textit{rational rotation algebra}. If otherwise, $\mathcal{A}_\theta$ is an \textit{irrational rotation algebra}. In addition, we define the functional $\tau_\theta:\mathcal{P}_\theta\to\mathbb{R}$ by
	\begin{equation}\label{trace1}
		\tau_\theta(p)=p_{\mathbf{0}},
	\end{equation}
	where $\mathbf{0}:=(0,0)$, which can be extended to a faithful tracial state on $\mathcal{A}_\theta$. The coefficients $p_\alpha$ in (\ref{18}) are the \textit{Fourier coefficients} of $p$ (cf. \cite{HAoQT} or \cite{RieffelNCtori}) and
	\begin{equation*}
		p_\alpha=\hat{p}(\alpha)=\tau_\theta\left( (U_\theta^{\alpha_1}V_\theta^{\alpha_2})^*p \right), \ \text{for any } \alpha\in\mathbf{N}.
	\end{equation*}
	
	We recall an analogue of the derivative in rotation algebras; i.e the derivations. In general, a \textit{derivation} of a C$^*$-algebra is a linear operator $\delta:\mathcal{A}\to\mathcal{A}$ such that
	\begin{equation*}
		\delta(ab)=\delta(a)b+a\delta(b), \ \text{for all } a,b\in\mathcal{A};
	\end{equation*}
	see for example, \cite[Chapter 8.6]{Pedersen}. In rotation algebras, we define the following kind of derivations (cf. \cite{HAoQT}), which give an analogue of differentiation in the commutative case. For $j=1,2$,
	\begin{equation}\label{6}
		\delta_j(U_\theta^{\alpha_1}V_\theta^{\alpha_2})=2\pi i\alpha_jU_\theta^{\alpha_1}V_\theta^{\alpha_2}, \ \text{for any } \alpha=(\alpha_1,\alpha_2)\in\mathbb{Z}^2,
	\end{equation}
	and we extend $\delta_j$ linearly on $\mathcal{P}_\theta$. Notice that $\delta_1$ and $\delta_2$ commute. We denote their product, $\delta_1\delta_2$, by $\delta_{12}$.
	
	We define the \textit{non-commutative/quantum torus} $\mathbb{T}_\theta^2$ as the rotation algebra $\mathcal{A}_\theta$. We also make the convention that the angle of rotation $\theta$ is irrational, which is also something common in the literature (cf. \cite{Spera} or \cite{Suk}). In fact, while in \cite{Spera} is not explicitly stated that $\theta$ is irrational, this is implicitly assumed and it can be inferred from the claim that $\mathcal{A}_\theta$ has a unique faithful tracial state (cf. \cite[p. 145]{Spera}). As a matter of fact, this is true for any irrational rotation algebra (cf. \cite[Chapter VI]{CAbE}). In addition, for fixed $\theta$, any C$^*$-algebra generated by unitaries $U$ and $V$ that satisfy (\ref{rotation}) is canonically isomorphic to $\mathbb{T}_\theta^2$ (cf. \cite[Chapter VI]{CAbE}).
	
	For orientation, one may realise $\mathbb{T}_\theta^2$ as the C$^*$-subalgebra of the algebra of bounded operators on $L^2(\mathbb{T}^2)$, generated by the unitary operators $U_\theta$ and $V_\theta$ which are described by
	\begin{equation}\label{UV}
		U_\theta:e_{m,n}\mapsto e_{m+1,n} \ \ \text{and } \ V_\theta:e_{m,n}\mapsto e^{-2\pi im\theta}e_{m,n+1},
	\end{equation}
	where $\{e_{m,n}\}_{m,n\in\mathbb{Z}}$ is the usual orthonormal basis. Notice that $U_\theta$ and $V_\theta$ satisfy (\ref{rotation}) and that, in this representation, the canonical trace defined in (\ref{trace1}) is given by
	\begin{equation*}
		\tau_\theta(T)=\left(Te_{\mathbf{0}},e_{\mathbf{0}}\right), \ \text{for any } T\in\mathbb{T}_\theta^2,
	\end{equation*}
	where $\left(\cdot,\cdot\right)$ is the usual inner product on $L^2(\mathbb{T}^2)$. Nonetheless, in what follows we do not need to make use of any particular representation of $\mathbb{T}_\theta^2$.
	
	We close our introduction by mentioning some examples of the most historic and important contributions to the topic. These include \cite{Connes}, \cite{PimVoic1} and \cite{RieffelCstar}. Moreover, further information on the topic is available in \cite{HAoQT}, \cite{ConMos}, \cite{CAbE}, \cite{RieffelNCtori} and \cite{Weaver}.
	
	\section{Preliminaries}\label{Prel}
	
	In this small section, we present the two most important tools for the proof of our main result. First, in \cite{KazWojc}, Kazaniecki and the second author, based on finite Riesz products, construct examples of Ornstein non-inequalities. More precisely, they work with Riesz products of the following form. For a sequence of natural numbers $\{m_j\}_{j\in\mathbb{N}}$, let
	\begin{equation}\label{Mj'}
		M_j:=\left\{ k\in\mathbb{Z}^2: \ k=((-1)^{j-1}m_j,m_j)+\sum_{l=1}^{j-1}\xi_l((-1)^{l-1}m_l,m_l), \ \xi_l\in\{0,\pm1\} \right\}.
	\end{equation}
	Then define the Riesz products
	\begin{equation}\label{2_0}
		P_n(s,t)=1+\sum_{j=1}^n\sum_{k=(k_1,k_2)\in\pm M_j}2^{-\left\| k \right\|_0}e^{2\pi i (k_1s+k_2t)}, \ \text{for all } (s,t)\in[0,1]^2,
	\end{equation}
	where $\left\| \cdot \right\|_0$ being as in (\ref{k0}). Notice that this is obtained after expanding the products (compare for example with the formulae for the 1-dimensional case, (\ref{eq1}), (\ref{eq0}) and (\ref{eq2})). Based on these products, they build the trigonometric polynomials $W_n$, which they call modified Riesz products and result the Ornstein's non-inequality as follows. For any $n\in\mathbb{N}$, let
	\begin{equation}\label{0}
		W_n(s,t):=-\frac{1}{4\pi^2}\sum_{j=1}^n\sum_{k\in\pm M_j}2^{-\left\| k \right\|_0}k_2^{-2}e^{2\pi i (k_1s+k_2t)}, \ \text{for all } (s,t)\in[0,1]^2.
	\end{equation}
	Notice that the latter are closely related with the original Riesz products, since
	\begin{equation*}
		\frac{\partial^2}{\partial t^2}W_n(s,t)=P_n(s,t)-1
	\end{equation*}
	and
	\begin{equation*}
		\frac{\partial^2}{\partial s^2}W_n(s,t)=B_{n}(s,t)+(P_n(s,t)-1), \ \text{for } (s,t)\in[0,1]^2,
	\end{equation*}
	where $B_{n}$ is a trigonometric polynomial with Fourier coefficients
	\begin{equation*}\label{Bn}
		\hat{B}_{n}(k)=\left[ \left( \frac{k_1}{k_2} \right)^2-1 \right]\hat{P}_n(k), \ \text{for all } k=(k_1,k_2)\in M_j \ \text{and } j=1,\dots,n.
	\end{equation*}
	Moreover,
	\begin{equation*}
		\frac{\partial^2}{\partial s\partial t}W_n(s,t)=E_{n}(s,t)+G_n(s,t), \ \text{for } (s,t)\in[0,1]^2,
	\end{equation*}
	where $E_{n}$ and $G_n$ are trigonometric polynomials with the following Fourier coefficients:
	\begin{equation*}\label{EG}
		\hat{E}_{n}(k)=\left( \frac{k_1}{k_2}-(-1)^{j-1} \right)\hat{P}_n(k) \ \ \text{and } \ \hat{G}_n(k)=(-1)^{j-1}\hat{P}_n(k),
	\end{equation*}
	for all $k=(k_1,k_2)\in M_j$ and $j=1,\dots,n$. The latter implies that
	\begin{equation*}
		G_n(s,t)=\sum_{j=1}^n(-1)^j\left[ P_j(s,t)-P_{j-1}(s,t) \right], \ \text{for all } (s,t)\in[0,1].
	\end{equation*}
	Then it is proved that the $L^1$-norms of the trigonometric polynomials $B_n$ and $E_n$ are uniformly bounded. On the other hand, the alternating sign in the expression of $G_n$ and a fast enough growth of the frequencies $m_j$ lead to a blowing-up $L^1$-norm of $G_n$, as $n\to+\infty$. As a matter of fact, this can be summarised to the following theorem (cf. \cite[Theorem 2.3]{KazWojc}).
	
	\begin{theorem}\label{Com}
		
		There exist constants $K_1>0$ and $C_1>0$ such that, for any natural number $n$ the modified finite Riesz products $W_n$ (defined in (\ref{0})), with frequencies $\{m_j\}_{j\in\mathbb{N}}$ which satisfy the following growth condition
		\begin{equation}\label{growth}
			\frac{m_j}{m_{j+1}}\leq 3^{-2n}, \ \text{for all } j\in\mathbb{N},
		\end{equation}
		satisfy the following:
		\begin{equation*}\label{com}
			\max\left\{\left\|\frac{\partial^2}{\partial s^2}W_n\right\|_{L^1(\mathbb{T}^2)}, \left\|\frac{\partial^2}{\partial t^2}W_n\right\|_{L^1(\mathbb{T}^2)}\right\}\leq K_1 \ \ \text{and } \ \left\|\frac{\partial^2}{\partial s\partial t}W_n\right\|_{L^1(\mathbb{T}^2)}\geq C_1n.
		\end{equation*}
	\end{theorem}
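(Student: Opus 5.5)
We prove Theorem \ref{Com} by analysing the three decompositions of the derivatives of $W_n$ given above, and bounding each piece in $L^1(\mathbb{T}^2)$ separately.

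\textbf{The term $\partial_t^2 W_n$.} Since $\partial_t^2 W_n=P_n-1$ and $P_n=\prod_{j=1}^n[1+\cos(2\pi((-1)^{j-1}m_js+m_jt))]\ge0$ has mean $\hat P_n(\mathbf 0)=1$, we get $\|P_n\|_1=1$. Hence $\|\partial_t^2W_n\|_1\le2$.

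\textbf{The terms $B_n$ and $E_n$.} For $k\in M_j$, lacunarity gives $k_1/k_2=(-1)^{j-1}+O(m_{j-1}/m_j)$, so both multipliers $(k_1/k_2)^2-1$ and $k_1/k_2-(-1)^{j-1}$ are tiny on the $j$-th block. Write $B_n=\sum_j B^{(j)}$, where $B^{(j)}$ is the part of $B_n$ supported on $\pm M_j$; similarly for $E_n$. Each block of $P_n$ equals $P_j-P_{j-1}=P_{j-1}\cos(2\pi m_j\cdot(s,t))$ with $m_j=((-1)^{j-1}m_j,m_j)$. It therefore has $L^1$ norm at most $1$, and its frequencies lie in an annulus of the form $|k-(\pm m_j)|\lesssim m_{j-1}$.

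On this block I apply a smooth Fourier multiplier $\phi_j$ which agrees with $(k_1/k_2)^2-1$ (respectively $k_1/k_2-(-1)^{j-1}$) on the block and is localised near $\pm m_j$. The $L^1$ norm of the kernel of $\phi_j$ is controlled by $\sup|\phi_j|$ plus derivative terms, rescaled to the block size. This should give
\[
\|\check\phi_j\|_{L^1}\lesssim \frac{m_{j-1}}{m_j}\cdot(\text{polynomial in } 3^{j}),
\]
where the polynomial factor accounts for the number of frequencies, or more crudely for the sum of $|\hat P_n(k)|$, which is at most $(3/2)^{j}$ after bounding coefficient by coefficient.

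In fact the crude route is enough. We have $\sum_{k\in M_j}|\hat P_n(k)|\le 1$ (the sum of $2^{-\|k\|_0}$ over $M_j$ is $2^{-1}(1+1)^{j-1}\cdot$, which is of order $1$), while the multiplier on $M_j$ is bounded by $C m_{j-1}/m_j\le C3^{-2n}$. This gives
\[
\|B_n\|_1+\|E_n\|_1\le\sum_j\sum_{k\in\pm M_j}|\text{mult}|\,|\hat P_n(k)|\le C n 3^{j}3^{-2n}\le C.
\]
The point is that the growth condition (\ref{growth}) is chosen exactly so that this $\ell^1$-of-coefficients bound absorbs the $3^j$ count of frequencies. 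Consequently $\|\partial_s^2W_n\|_1\le\|B_n\|_1+2\le K_1$.

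\textbf{The lower bound for $G_n$.} We have
\[
\|\partial_s\partial_tW_n\|_1\ge\|G_n\|_1-\|E_n\|_1,
\]
so it suffices to show $\|G_n\|_1\ge cn$. Write $G_n=\sum_j(-1)^jP_{j-1}\cos(2\pi m_j\cdot x)$. I would test $G_n$ against a bounded function $h$ with $\|h\|_\infty\le 1$. The natural choice is
\[
h=\operatorname{sgn}\text{-type combination }\ \sum_j(-1)^j\cos(2\pi m_j\cdot x)\ \text{ normalised},
\]
or better, a product $h=\prod_j(1+i\varepsilon(-1)^j\cos(\cdot))$ suitably truncated. The more robust approach is to use the known $L^1$ lower bound for lacunary-type sums. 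Because of the very fast growth, the functions $\cos(2\pi m_j\cdot x)$ behave like independent random variables with respect to $P_{j-1}\,dx$. Thus $G_n$ is essentially a martingale with respect to the probability measure $P_n\,dx$, with increments $(-1)^j\cos(\theta_j)$. The quadratic-variation heuristic would then only give $\sqrt n$, so the signed structure must be exploited instead.

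For this I pair $G_n$ with $h=\sum_j(-1)^j\cos(2\pi m_j\cdot x)$ weighted appropriately. The actual argument in \cite{KazWojc} compares $\int|G_n|$ with $\int|G_n|P$-type integrals. This step, producing linear rather than $\sqrt n$ growth, is the main obstacle. I would try to evaluate
\[
\int G_n\,\overline{\phi}\quad\text{with}\quad \phi=\prod_j\bigl(1+i\tfrac{(-1)^j}{2}\cos\bigr)\Big/\|\cdot\|_\infty,
\]
exploiting that Riesz products with imaginary coefficients have uniformly bounded sup norm up to $e^{Cn\varepsilon^2}$. With $\varepsilon\sim 1$ this loses; so instead take $\varepsilon=1/\sqrt n$ blocks. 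Failing that, I would simply cite \cite[Theorem 2.3]{KazWojc}, since the statement is exactly that theorem.
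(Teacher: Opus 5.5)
Your upper bounds are correct and follow the paper's sketch, which only records that $B_n$ and $E_n$ are uniformly bounded. You get $\|\partial_t^2W_n\|_1=\|P_n-1\|_1\le2$ by positivity. On $\pm M_j$ both multipliers are $O(m_{j-1}/m_j)=O(3^{-2n})$, and they vanish for $j=1$, so the trivial bound $\|f\|_1\le\sum_k|\hat f(k)|$ suffices. Your count is wrong, though. In fact $\sum_{k\in M_j}2^{-\|k\|_0}=2^{j-2}$, not $O(1)$, and $\sum_k|\hat P_n(k)|=2^n$. The estimate survives only because $2^n3^{-2n}=(2/9)^n$, which is exactly why the growth condition depends on $n$.

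\textbf{The gap.} The lower bound $\|G_n\|_1\ge cn$ is the whole content of the theorem, and none of your routes proves it.

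Any sign-blind argument must fail. $G_n$ and $P_n-1=\sum_j(P_j-P_{j-1})$ have the same Fourier coefficient moduli, the same $L^2$ norm and the same square function $(\sum_j|P_j-P_{j-1}|^2)^{1/2}$, yet $\|P_n-1\|_1\le2$. Quadratic-variation or Davis-type bounds control $\sup_k|G_k|$, not $|G_n|$.

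The imaginary Riesz product $R_\eta=\prod_{l\le n}(1+i\eta(-1)^{l-1}\cos(2\pi\mathbf m_l\cdot x))$ is sign-sensitive, but too weak. By dissociation, with $\langle f,g\rangle=\int f\bar g$,
\[
|\langle G_n,R_\eta\rangle|=\frac{\eta}{2}\Bigl|\sum_{j\le n}\prod_{l<j}\bigl(1-\tfrac{i\eta}{2}(-1)^{l-1}\bigr)\Bigr|\le\frac{\eta}{2}\sum_{j\le n}\Bigl(1+\frac{\eta^2}{4}\Bigr)^{(j-1)/2},
\]
while $\|R_\eta\|_\infty=(1+\eta^2)^{n/2}$, attained at $x=0$. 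The resulting lower bound is therefore at most $C\min\{\eta n,1+\eta^{-1}\}\le C'\sqrt n$ for every $\eta$. You give no argument that a block variant does better.

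\textbf{What is missing} is a distributional, sign-sensitive analysis. In the dyadic model $X_j=2^j\chi_{[0,2^{-j})}$, the alternating transform $\sum_{j\le n}(-1)^{j-1}(X_j-X_{j-1})$ equals $\frac13-\frac43(-2)^{k-1}$ on $[2^{-k},2^{-k+1})$. Each of the $n$ shells thus contributes about $\frac23$ to the $L^1$ norm, whereas the unsigned sum equals $-1$ there. A proof for Riesz products has to isolate the analogue of these shells and show that on them the alternating signs block the cancellation that keeps $P_n-1$ small. It also needs a quantitative transfer from the independent model to $\mathbb T^2$ via the growth condition.

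The paper does not carry this out either; it imports Theorem~\ref{Com} from \cite[Theorem 2.3]{KazWojc}. Your final fallback of citing that theorem is exactly what the paper does and is legitimate. But then the citation, not your argument, carries the decisive step, and as a self-contained proof the proposal stops there.
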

	
	\begin{remark}\label{rem2}
	It can be checked that the degree of the trigonometric polynomials $W_n$ increases like $e^{n^2}$, as $n\to+\infty$.
	\end{remark}
	
	\section{Non-commutative Riesz products}\label{NCRP}
	
	\subsection{Definition of the products and the main result}\label{Def}
	
	\noindent Going back to the products of the form (\ref{2_0}), we can rewrite them as
	\begin{equation*}
		P_n(s,t)=\prod_{j=1}^n\left[ 1+\frac{1}{2}\left( e^{2\pi i m_j((-1)^{j-1}s+t)} + e^{-2\pi i m_j((-1)^{j-1}s+t)} \right) \right], \ \text{for all } (s,t)\in[0,1]^2.
	\end{equation*}
	Motivated by (\ref{UV}), the above can be thought of as the value of the polynomial below for $\theta=0$;
	\begin{equation*}
		P_{\theta,n}=P_{\theta,n}(U_\theta,V_\theta)=\prod_{j=1}^{n}\left[ I +\frac{1}{2}\left( U_\theta^{(-1)^{j-1}m_j}V_\theta^{m_j}+V_\theta^{-m_j}U_\theta^{(-1)^jm_j} \right) \right],
	\end{equation*}
	for $\theta\in[0,1]$. This defines a sequence of (in general) non-commutative Riesz products. Because of commutation limitations, in order the above product to be meaningful, we define the product in the following sense.
	\begin{definition}
		Let $\mathcal{A}_\theta$ be a rotation algebra with generators $U_\theta$ and $V_\theta$. For a sequence of natural numbers $\{m_j\}_{j\in\mathbb{N}}$ that satisfies the lacunary condition
		\begin{equation*}
			\frac{m_{j+1}}{m_j}\geq 3, \ \text{for all } j\in\mathbb{N},
		\end{equation*}
		we define the factors
		\begin{equation}\label{2_1}
			p_{\theta,j}:= I+\frac{1}{2}\left( U_\theta^{(-1)^{j-1}m_j}V_\theta^{m_j}+V_\theta^{-m_j}U_\theta^{(-1)^jm_j} \right), \ \text{for all } j\in\mathbb{N},
		\end{equation}
		and the (non-commutative) Riesz products
		\begin{equation}\label{2_4}
			P_{\theta,n}:=p_{\theta,n}p_{\theta,n-1}\dots p_{\theta,1}, \ \text{for all } n\in\mathbb{N}.
		\end{equation}
	\end{definition}
	Notice that $p_{\theta,j}\geq0$ so, formula (\ref{2_4}) constitutes a natural analogue of finite Riesz products on the non-commutative torus. In fact, when restricted to $\theta=0$, we are reduced to the usual commutative Riesz products. Nonetheless, there are a couple of caveats the reader should be aware of and which are discussed in the following remark.
	
	\begin{remark}\label{rem0}
		First, it is important to notice that in general $P_{\theta,n}$ is not non-negative; for otherwise, this would demand the factors $p_{\theta,j}$ to be pairwise commuting. Moreover, notice that the Fourier coefficients of $P_{\theta,n}$ are not always identical with those of $P_n$ (see (\ref{2_0})). In general, what is true is that they are equal to those of $P_n$ scaled by a unimodular complex number that depends on $\theta$ (because of the rotation relation (\ref{rotation})). This means that while it would be tempting to define the non-commutative Riesz products as the trigonometric polynomials of $U_\theta$ and $V_\theta$ that have the same Fourier coefficients with the respective commutative ones, this would risk calling products elements of $\mathcal{A}_\theta$ which may be far from being products of the desired factors.
	\end{remark}

	By restricting ourselves back to the Riesz products of the form described by (\ref{2_1}) and (\ref{2_4}), we are now ready to state our main result, which generalises Theorem \ref{Com}. But first we also need to define the non-commutative analogues of the commutative trigonometric polynomials $W_n$ in (\ref{0}). So, for any natural number $n$ and any $\theta\in[0,1]$, let
	\begin{equation}\label{Wn}
		W_{\theta,n}:=-\frac{1}{4\pi^2}\sum_{j=1}^n\sum_{k\in M_j}k_2^{-2}\left( \hat{P}_{\theta,n}(k)U_\theta^{k_1}V_\theta^{k_2}+\hat{P}_{\theta,n}(-k)U_\theta^{-k_1}V_\theta^{-k_2} \right),
	\end{equation}
	where the sets $M_j$ are given by (\ref{Mj'}). We draw the reader's attention to fact that, in contrast with the commutative case, $\hat{P}_{\theta,n}(k)\neq\hat{P}_{\theta,n}(-k)$, but
	\begin{equation*}\label{FP}
		\left| \hat{P}_{\theta,n}(\pm k) \right|=\left| \hat{P}_{n}(\pm k) \right|=2^{-\left\| k \right\|_0},
	\end{equation*}
	where $\left\| \cdot \right\|_0$ is defined in (\ref{k0}); see also Remark \ref{rem0}. As in the commutative case, notice that
	\begin{equation}\label{3}
		\delta^2_2(W_{\theta,n})=P_{\theta,n}-1, \ \delta^2_1(W_{\theta,n})=B_{\theta,n}+(P_{\theta,n}-1), \ \text{for all } n\in\mathbb{N},
	\end{equation}
	and
	\begin{equation}\label{5}
		\delta_{12}(W_{\theta,n})=E_{\theta,n}+G_{\theta,n}, \ \text{for all } n\in\mathbb{N},
	\end{equation}
	where, for any $j=1,\dots,n$ and any $k=(k_1,k_2)\in \pm M_j$,
	\begin{equation}\label{4}
		\hat{B}_{\theta,n}(k)=\left[\left(\frac{k_1}{k_2}\right)^2-1\right]\hat{P}_{\theta,n}(k), \ \hat{E}_{\theta,n}(k)=\left( \frac{k_1}{k_2}-(-1)^{j-1} \right)\hat{P}_{\theta,n}(k)
	\end{equation}
	and
	\begin{equation}\label{Gn}
		\hat{G}_{\theta,n}(k)=(-1)^{j-1}\hat{P}_{\theta,n}(k).
	\end{equation}
	
	\begin{thm}\label{main}
		There exist positive constants $K$ and $C$, such that for any irrational $\theta\in[0,1]$ and for any natural number $N$, there exists a trigonometric polynomial $Q$ of $\mathbb{T}_\theta^2$, obtained from a finite non-commutative Riesz product, that depends on $N$, such that
		\begin{equation}\label{2_21}
			\max_{j=1,2}\left\{\left\|\delta^2_jQ\right\|_{L^1(\mathbb{T}_\theta^2)}\right\}\leq K \ \ \text{and } \ \left\|\delta_{12}Q\right\|_{L^1(\mathbb{T}_\theta^2)}\geq C N,
		\end{equation}
		where the derivations $\delta$ are defined in (\ref{6}).
	\end{thm}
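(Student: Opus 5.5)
We will deduce the statement from Theorem \ref{Com} via a perturbation argument in the deformation parameter. For fixed $N$ we take $n=N$ and a frequency sequence satisfying (\ref{growth}), and consider the family $\vartheta\mapsto W_{\vartheta,n}$ from (\ref{Wn}). Every element involved, namely $\delta_j^2W_{\vartheta,n}$ and $\delta_{12}W_{\vartheta,n}$, is a trigonometric polynomial supported on the fixed finite set $\{0\}\cup\bigcup_{j\le n}\pm M_j$. Its Fourier coefficients are the commutative coefficients multiplied by unimodular factors $e^{2\pi i\vartheta c_k}$, where $c_k$ is an integer (coming from reordering $U$'s and $V$'s in the expansion of (\ref{2_4})). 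Hence these coefficients depend continuously on $\vartheta$, and at $\vartheta=0$ they reduce to the coefficients of the commutative $W_n$ and its derivatives.

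The first key step is a continuity lemma. Let $\mathbf N\subset\mathbb Z^2$ be finite and let $\vartheta\mapsto a_\alpha(\vartheta)$ be continuous. Then $\vartheta\mapsto\|\sum_{\alpha\in\mathbf N}a_\alpha(\vartheta)U_\vartheta^{\alpha_1}V_\vartheta^{\alpha_2}\|_{L^1(\mathbb T_\vartheta^2)}$ is continuous, at least at $\vartheta=0$. To prove it, I would use that $\tau_\vartheta(|x|)=\tau_\vartheta((x^*x)^{1/2})$. The element $x^*x$ is again a polynomial on a fixed finite support, with coefficients continuous in $\vartheta$, obtained by multiplying out using (\ref{rotation}). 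By Weierstrass, approximate $\sqrt{\cdot}$ uniformly on $[0,R]$ by polynomials $q$, where $R$ bounds $\|x\|_\infty^2\le(\sum|a_\alpha|)^2$ uniformly in $\vartheta$. Then $\tau_\vartheta(q(x^*x))$ is a finite sum of products of coefficients and phases $e^{2\pi i\vartheta\cdot\text{integer}}$, hence continuous in $\vartheta$. Since it approximates $\tau_\vartheta(|x|)$ uniformly in $\vartheta$, continuity follows. At $\vartheta=0$ the algebra is $C(\mathbb T^2)$ (the universal algebra; its trace is Haar measure), so the value is the classical $L^1$-norm.

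Next, for fixed $n$ we apply the lemma to $\delta_1^2W_{\vartheta,n}$, $\delta_2^2W_{\vartheta,n}$ and $\delta_{12}W_{\vartheta,n}$. This gives $\varepsilon_n>0$ such that for $|\vartheta|<\varepsilon_n$ the first two norms are at most $K_1+1$ and the third is at least $C_1n-1$. The real difficulty is that $\theta$ is prescribed, not small. To handle it, I would exploit the freedom to rescale the generators. For integers $a,b$ the elements $U_\theta^{a}$ and $V_\theta^{b}$ satisfy the rotation relation with parameter $ab\theta \pmod 1$. By irrationality (Dirichlet/equidistribution) we can pick $q$ with $\|q\theta\|<\varepsilon_n$, and set $U=U_\theta^{q}$, $V=V_\theta$ (or split the factor $q$ between the two generators as convenient). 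These generate a copy of $\mathbb T^2_{\vartheta}$ with $\vartheta=q\theta$ mod 1 small. The trace $\tau_\theta$ restricts to the canonical trace of that copy, since a monomial $U^{a}V^{b}$ has trace $\delta_{(a,b),0}$. So $L^1$-norms computed in the subalgebra agree with those in $\mathbb T^2_\theta$, and the subalgebra is isomorphic to $\mathcal A_\vartheta$ by uniqueness for irrational $\vartheta$.

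The obstacle is the behaviour of the derivations under this substitution. We would define $Q:=W_{\vartheta,n}(U_\theta^{q},V_\theta)$. Since $\delta_1$ applied to $U_\theta^{qk_1}V_\theta^{k_2}$ multiplies by $2\pi i q k_1$, we get $\delta_1^2 Q=q^2(\delta_1^2W_{\vartheta,n})(U,V)$ and $\delta_{12}Q=q(\delta_{12}W_{\vartheta,n})(U,V)$, while $\delta_2^2$ is unchanged. The factors of $q$ therefore have to be absorbed. One remedy is to use the anisotropic freedom: scale the commutative construction so that the first-coordinate frequencies are divided by $q$. Equivalently, normalise by replacing $Q$ with $q^{-1}Q$ after choosing $U=U_\theta^q,V=V_\theta^q$ symmetrically, with $\|q^2\theta\|$ small, so that all three second derivatives scale by the same $q^2$ and $Q:=q^{-2}W_{\vartheta,n}(U_\theta^q,V_\theta^q)$ works. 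This symmetric choice is what I would try first, using that $q^2\theta$ mod 1 can be made arbitrarily small for irrational $\theta$ (Weyl). The constants are then $K=K_1+1$ and $C=C_1/2$, with the bound $C_1N-1\ge C_1N/2$ holding for $N$ large and small $N$ handled by shrinking $C$.
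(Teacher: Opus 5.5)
Your proposal is correct and is essentially the paper's proof: it uses the same Weierstrass-approximation continuity lemma (Lemma \ref{lem0}), deduces the small-$\theta$ case from Theorem \ref{Com}, and reduces the general case via $U_\theta^{q},V_\theta^{q}$ with $q^2\theta$ small mod $1$ (Weyl) and $Q=q^{-2}W_{\vartheta,N}(U_\theta^q,V_\theta^q)$, which is exactly the paper's $\tilde W_{\theta,N}=M^{-2}\rho(W_{\tilde\theta,N})$; identifying the trace directly on monomials instead of via uniqueness of the tracial state is a harmless, slightly cleaner variation. The one thing to tidy is the constant $C$: the tolerance in the continuity lemma is arbitrary, so take it below $C_1/2$ (as the paper does with $\varepsilon<\min\{K_1,\tfrac12,C\}$) rather than $1$, so that $C_1N-\varepsilon\ge C_1N/2$ for every $N$ and small $N$ need no separate treatment.
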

	
	\begin{remark}\label{rem1}
	Notice that for $\theta=0$, $\mathbb{T}_0^2$ (i.e. the universal C$^*$-algebra generated by two commuting unitaries) is isomorphic to $C(\mathbb{T}^2)$. In fact, there exists a $\ast$-isomorphism $\phi:\mathbb{T}_0^2\to C(\mathbb{T}^2)$ such that
	\begin{equation*}
		\phi(U_0)(s,t)=e^{2\pi is} \ \ \text{and } \ \phi(V_0)(s,t)=e^{2\pi i t}, \ \text{for } (s,t)\in[0,1]^2.
	\end{equation*}	
	Then, by a density argument, the canonical trace $\tau_0$ on $\mathbb{T}_0^2$ and the integration $\tau$ on $\mathbb{T}^2$ with respect to the normalised Haar measure are linked with the following relation
	\begin{equation*}
		\tau(\phi(a))=\tau_0(a), \ \text{for all } a\in \mathbb{T}^2_0.
	\end{equation*}
	Moreover, since $\phi$ is a $\ast$-homomorphism, continuous functional calculus yields
	\begin{equation*}
		\phi(\left|a\right|)=\left|\phi(a)\right|, \ \text{for any } a\in\mathbb{T}_0^2.
	\end{equation*}
	Thus, $\left\| a \right\|_{L^1(\mathbb{T}_0^2)}=\left\| \phi(a) \right\|_{L^1(\mathbb{T}^2)}$, for all $a$ in $\mathbb{T}_0^2$.
	\end{remark}
	
	\subsection{Preliminary lemmata}\label{Lemmas}
	
	In this section, we present some necessary lemmata that will eventually allow us to link the non-commutative $L^1$-norms of Riesz products with the commutative case and make use of Theorem \ref{Com}. We note that many of the next lemmata may be rather standard, but we include the proofs for sake of completeness and for the reader's convenience.
	
	\begin{lem}\label{lem0}
	Let $N$ be a natural number and
	\begin{equation*}
		P(\theta)=\sum_{\left|m\right|,\left|n\right|\leq N} p_{m,n}(\theta)U_{\theta}^mV_{\theta}^n
	\end{equation*}
	be a trigonometric polynomial in $\mathbb{T}_\theta^2$ such that its Fourier coefficients $p_{m,n}(\theta)$ are continuous functions of $\theta$ on $[0,1]$. Then the map $\theta\mapsto\left\| P(\theta) \right\|_{L^1(\mathbb{T}_\theta^2)}$ is continuous on $[0,1]$.
	\end{lem}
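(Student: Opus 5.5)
The plan is to avoid comparing the algebras $\mathbb{T}_\theta^2$ for different $\theta$ as operators, which is awkward: in the representation (\ref{UV}) the map $\theta\mapsto V_\theta$ is not norm continuous. Instead I will reduce $\left\| P(\theta) \right\|_{L^1(\mathbb{T}_\theta^2)}=\tau_\theta\left( (P(\theta)^*P(\theta))^{\sfrac{1}{2}} \right)$ to finitely many moments $\tau_\theta\left( (P(\theta)^*P(\theta))^k \right)$. Each moment is computed purely algebraically from the rotation relation (\ref{rotation}) and the trace formula (\ref{trace1}), so it is manifestly continuous in $\theta$. The square root is handled by uniform polynomial approximation on a fixed interval.

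\emph{Step 1 (uniform bound).} Set $X_\theta:=P(\theta)^*P(\theta)\geq0$. Since $U_\theta,V_\theta$ are unitaries, the triangle inequality gives
\begin{equation*}
\left\| X_\theta \right\|\leq\Big( \sum_{\left|m\right|,\left|n\right|\leq N}\left| p_{m,n}(\theta) \right| \Big)^2.
\end{equation*}
The right-hand side is continuous on the compact set $[0,1]$, hence bounded by some $R$. Thus $\sigma(X_\theta)\subset[0,R]$ for every $\theta$.

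\emph{Step 2 (approximation).} Fix $\varepsilon>0$. By Weierstrass, choose a polynomial $q$ with $\sup_{x\in[0,R]}\left| \sqrt{x}-q(x) \right|<\varepsilon$. By continuous functional calculus, $\left\| X_\theta^{\sfrac{1}{2}}-q(X_\theta) \right\|<\varepsilon$. Since $\tau_\theta$ is a state (positive and unital, hence of norm one), this gives
\begin{equation*}
\left| \left\| P(\theta) \right\|_{L^1(\mathbb{T}_\theta^2)}-\tau_\theta(q(X_\theta)) \right|<\varepsilon \quad \text{for all } \theta\in[0,1].
\end{equation*}
The choice of $q$ does not depend on $\theta$; this uniformity is the point of Step 1.

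\emph{Step 3 (moments are continuous).} Expand $X_\theta^k$ as a finite sum of words in the monomials $U_\theta^{\pm m}V_\theta^{\pm n}$. The coefficients are products of the $p_{m,n}(\theta)$ and their conjugates, which are continuous in $\theta$. Using (\ref{rotation}), each word equals $e^{2\pi i\theta c}U_\theta^{a}V_\theta^{b}$. Here the integers $a,b,c$ depend only on the word, not on $\theta$. By (\ref{trace1}), $\tau_\theta$ keeps exactly the words with $(a,b)=(0,0)$. Hence $\tau_\theta(X_\theta^k)$ is a finite sum of products of continuous functions and phases $e^{2\pi i\theta c}$, and is therefore continuous. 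So $\theta\mapsto\tau_\theta(q(X_\theta))$ is continuous.

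\emph{Step 4 (conclusion).} By a standard $\varepsilon/3$ argument, $\theta\mapsto\left\| P(\theta) \right\|_{L^1(\mathbb{T}_\theta^2)}$ is a uniform limit of continuous functions, hence continuous.

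The step I expect to need the most care is justifying that $\tau_\theta$, as given by (\ref{trace1}), is a genuine state on $\mathcal{A}_\theta$ for \emph{every} $\theta\in[0,1]$, including rational $\theta$ and $\theta=0$, where the trace is not unique. This is needed for the estimate in Step 2. I would take it from the paper's statement that (\ref{trace1}) extends to a faithful tracial state. Alternatively, one can realise it as the vector state $T\mapsto(Te_{\mathbf 0},e_{\mathbf 0})$ in the representation (\ref{UV}); this is consistent with Remark \ref{rem1} at $\theta=0$. In any case the computation in Step 3 uses only the algebraic relations, so it is valid in the universal algebra.
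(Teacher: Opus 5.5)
Your proof is correct and follows essentially the same route as the paper's: a $\theta$-uniform bound on $\|P(\theta)\|$, uniform polynomial approximation of $\sqrt{t}$ on a fixed interval via functional calculus together with $\|\tau_\theta\|=1$, and continuity of $\tau_\theta(q(P(\theta)^*P(\theta)))$ as a constant Fourier coefficient computed from the rotation relation. The only difference is cosmetic: the paper takes the approximating polynomial non-negative so that it can write $|Q(P^*P)|=Q(P^*P)$, whereas your direct estimate $|\tau_\theta(X_\theta^{1/2})-\tau_\theta(q(X_\theta))|\leq\|X_\theta^{1/2}-q(X_\theta)\|$ makes that requirement unnecessary.
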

	
	\begin{proof}
	Let $\theta_0\in[0,1]$ and $\varepsilon>0$. For
	\begin{equation*}
		M=\sum_{\left|m\right|,\left|n\right|\leq N}\left\|p_{m,n}(\theta)\right\|_{L^\infty([0,1])}
	\end{equation*}
	notice that $\left\|P(\theta)\right\|\leq M$, for all $\theta\in[0,1]$, where $M$ is independent of $\theta$. Moreover, for $f(t)=\sqrt{t}$, there exists a real non-negative polynomial $Q$ on $[0,M^2]$ such that
	\begin{equation*}
		\left\| f-Q \right\|_{L^\infty([0,M^2])}<\frac{\varepsilon}{4}.
	\end{equation*}
	By applying continuous functional calculus for $P^*(\theta)P(\theta)$, we get that, for all $\theta\in[0,1]$,
	\begin{equation}\label{0'}
		\left\| f\left(P^*(\theta)P(\theta)\right)-Q\left(P^*(\theta)P(\theta)\right) \right\|= \left\| \left|P(\theta)\right| - Q\left( P^*(\theta)P(\theta) \right) \right\|<\frac{\varepsilon}{4}.
	\end{equation}
	Moreover,
	\begin{equation*}
		\left| \left\| P(\theta) \right\|_{L^1(\mathbb{T}_\theta^2)} -\left\| P(\theta_0) \right\|_{L^1(\mathbb{T}_{\theta_0}^2)} \right| = \left| \tau_\theta(\left|P(\theta)\right|) - \tau_{\theta_0}(\left|P(\theta_0)\right|) \right|
	\end{equation*}
	and the right hand side is bounded by
	\begin{multline}\label{1'}
		\big| \tau_\theta\big(\left|P(\theta)\right|-\left|Q\left( P^*(\theta)P(\theta) \right)\right|\big)\big| +\\ + \left| \tau_\theta\big(\left|Q(P^*(\theta)P(\theta))\right|\big) - \tau_{\theta_0}\big(\left|Q(P^*(\theta_0)P(\theta_0))\right|\big) \right|
		+ \big| \tau_{\theta_0}\big(\left|P(\theta_0)\right|-\left|Q\left( P^*(\theta_0)P(\theta_0) \right)\right|\big)\big|.
	\end{multline}
	Notice that, because of the positivity of $Q$, $\left|Q(P^*P)\right|=Q(P^*P)$ and then, by applying (\ref{0'}), the first and the third summands in the above relation are bounded by $\frac{\varepsilon}{4}$. In order to bound the middle term, notice that the positivity of $Q$ together with the continuity of the Fourier coefficients $p_{m,n}(\theta)$ and the commutation relation (\ref{rotation}) imply that, for any $\theta\in[0,1]$, $Q(P^*(\theta)P(\theta))$ is a non-negative trigonometric polynomial of $\mathbb{T}_\theta^2$ of the following form
	\begin{equation*}
		Q(P^*(\theta)P(\theta))=\sum_{\left|m\right|,\left|n\right|\leq K} c_{m,n}(\theta)U^m_\theta V^n_\theta, \ \text{for some } K\in\mathbb{N},
	\end{equation*}
	where the Fourier coefficients $c_{m,n}(\theta)$ are continuous on $[0,1]$. Therefore,
	\begin{equation*}
		\tau_\theta\big(\left|Q(P^*(\theta)P(\theta))\right|\big)= c_{0,0}(\theta), \ \text{for all } \theta\in[0,1],
	\end{equation*}
	and thus, the middle term in (\ref{1'}) equals $\left|c_{0,0}(\theta)-c_{0,0}(\theta_0)\right|$. By continuity, there exists a neighbourhood $\mathcal{U}$ of $\theta_0$ such that for any $\theta\in\mathcal{U}$,
	\begin{equation*}
		\left|c_{0,0}(\theta)-c_{0,0}(\theta_0)\right|<\frac{\varepsilon}{2}.
	\end{equation*}
	We have already seen that the first and third summands in (\ref{1'}) are bounded by $\frac{\varepsilon}{4}$. Therefore, by putting all the estimates together, we obtain that
	\begin{equation*}
		\left| \left\| P(\theta) \right\|_{L^1(\mathbb{T}_\theta^2)} -\left\| P(\theta_0) \right\|_{L^1(\mathbb{T}_{\theta_0}^2)} \right| <\varepsilon, \ \text{for all } \theta\in\mathcal{U}.
	\end{equation*}
	\end{proof}
	
	\begin{remark}
	In what follows, we will often make use of negative deformation parameters $\theta$, despite the fact that $\theta$ is generally considered in $[0,1]$. This is not an abuse of notation, since the commutation relation (\ref{rotation}) presents periodicity. More precisely, the algebras $\mathcal{A}_\theta$ and $\mathcal{A}_{\theta'}$ will be considered identical for any real $\theta$ and $\theta'$ such that $\theta\equiv\theta'\bmod{\mathbb{Z}}$.
	\end{remark}
	
	\begin{lem}\label{lem1}
		Let $W_{\theta,n}$ be the polynomials defined in (\ref{Wn}). Then, for any $\varepsilon>0$ and any natural number $n$, there exists some $\theta_0=\theta_0(n,\varepsilon)$ in $(0,1)$ such that
		\begin{equation*}
			\left| \left\| P_{\theta,n}-1 \right\|_{L^1(\mathbb{T}_\theta^2)} - \left\| P_n-1 \right\|_{L^1(\mathbb{T}^2)} \right|<\varepsilon, \ \text{for all } \theta\in[-\theta_0,\theta_0].
		\end{equation*}
		Therefore, by (\ref{3}),
		\begin{equation*}
			\left| \left\| \delta^2_2 W_{\theta,n} \right\|_{L^1(\mathbb{T}_\theta^2)} - \left\| \delta_2^2 W_n \right\|_{L^1(\mathbb{T}^2)} \right|<\varepsilon, \ \text{for all } \theta\in[-\theta_0,\theta_0].
		\end{equation*}
	\end{lem}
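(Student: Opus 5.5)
The plan is to apply Lemma \ref{lem0} to the polynomial $P(\theta):=P_{\theta,n}-1$. For this we must show that its Fourier coefficients, written in the normal-ordered basis $U_\theta^{k_1}V_\theta^{k_2}$, are continuous functions of $\theta$ on a neighbourhood of $0$. We then identify the value at $\theta=0$ with the commutative norm via Remark \ref{rem1}.

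\textbf{Step 1: the coefficients are explicit.} Fix $n$ and expand the product $p_{\theta,n}\cdots p_{\theta,1}$ factor by factor. Each factor $p_{\theta,j}$ is $I$ plus one half of two monomials, namely $U_\theta^{(-1)^{j-1}m_j}V_\theta^{m_j}$ and $V_\theta^{-m_j}U_\theta^{(-1)^jm_j}$. The second monomial is brought to normal order by the relation (\ref{rotation}), in the form $V^bU^a=e^{-2\pi i\theta ab}U^aV^b$.

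A product of finitely many normal-ordered monomials $U^{a_1}V^{b_1}\cdots U^{a_r}V^{b_r}$ equals
\[
e^{2\pi i\theta c}\,U^{\sum a_i}V^{\sum b_i},
\]
where $c$ is an integer depending only on the exponents and not on $\theta$. So each of the $3^n$ terms of the expansion contributes $2^{-(\text{number of non-identity choices})}e^{2\pi i\theta c}$ to a single normal-ordered monomial. By the lacunarity condition, the exponent determines the choice uniquely, exactly as in the commutative case.

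Hence every coefficient $\hat P_{\theta,n}(k)$ has the form $2^{-\|k\|_0}e^{2\pi i\theta c_k}$, or vanishes, with $c_k\in\mathbb{Z}$. This gives the modulus identity noted before Theorem \ref{main}. In particular every coefficient is a trigonometric polynomial in $\theta$, hence continuous on $\mathbb{R}$, and the support lies in the fixed finite box $|k_1|,|k_2|\le\sum_{j\le n} m_j$. At $\theta=0$ every phase is $1$, so $\hat P_{0,n}(k)=\hat P_n(k)$.

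\textbf{Step 2: continuity and the value at $0$.} Lemma \ref{lem0} now gives continuity of $\theta\mapsto\|P_{\theta,n}-1\|_{L^1(\mathbb{T}^2_\theta)}$ at $\theta=0$. The lemma is stated on $[0,1]$, but by the periodicity remark negative $\theta$ are identified with $\theta+1$. Since the coefficients are $1$-periodic, continuity at $0$ from both sides amounts to continuity at $0$ and at $1$ on $[0,1]$, which the lemma provides.

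At $\theta=0$, Remark \ref{rem1} gives $\phi(P_{0,n}-1)=P_n-1$, because the coefficients agree and $\phi(U_0^{k_1}V_0^{k_2})=e^{2\pi i(k_1s+k_2t)}$. It follows that $\|P_{0,n}-1\|_{L^1(\mathbb{T}_0^2)}=\|P_n-1\|_{L^1(\mathbb{T}^2)}$. Continuity then yields $\theta_0(n,\varepsilon)\in(0,1)$ with the claimed inequality on $[-\theta_0,\theta_0]$.

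\textbf{Step 3: the second claim.} By (\ref{3}), $\delta_2^2W_{\theta,n}=P_{\theta,n}-1$, and likewise $\partial_t^2W_n=P_n-1$. The two norms in the second claim are therefore literally the ones just compared.

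The only step needing real care is Step 1, namely confirming that reordering produces only $\theta$-continuous unimodular phases and no collisions of monomials that could depend on $\theta$. Collisions are excluded because the set of exponents arising in the expansion does not depend on $\theta$. Everything else is a direct application of Lemma \ref{lem0}.
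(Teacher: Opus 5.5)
Your proposal is correct and takes the same route as the paper. The paper's proof observes that the commutation relation makes the Fourier coefficients of $P_{\theta,n}$ continuous in $\theta$, then applies Lemma \ref{lem0} and Remark \ref{rem1}, with (\ref{3}) giving the second claim. You make this explicit through the form $2^{-\|k\|_0}e^{2\pi i\theta c_k}$, and you spell out the handling of negative $\theta$ via $1$-periodicity and $\mathcal{A}_1=\mathcal{A}_0$, which the paper leaves implicit.
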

	
	\begin{proof}
	Notice that, because of the commutation relation (\ref{rotation}), the Fourier coefficients of the Riesz products $P_{\theta,n}$ are continuous functions of $\theta$. Then the result is obtained by an immediate application of Lemma \ref{lem0} and Remark \ref{rem1}.
	\end{proof}
	
	By noticing that $\delta^2_1(W_{\theta,n})$ and $\delta_{12}(W_{\theta,n})$ are trigonometric polynomials of $\mathbb{T}_\theta^2$ with Fourier coefficients that depend continuously on $\theta$ (see (\ref{3})-(\ref{Gn})) and arguing similarly to the previous Lemma, we obtain the following.
	
	\begin{lem}\label{lem3}
		Let $W_{\theta,n}$ be the polynomials defined in (\ref{Wn}). Then, for any $\varepsilon>0$ and any natural number $n$, there exists some $\theta_0=\theta_0(n,\varepsilon)$ in $(0,1)$ such that
		\begin{equation}\label{2}
			\max_{i,j=1,2}\left\{\left|\left\|\delta_i\delta_j(W_{\theta,n})\right\|_{L^1(\mathbb{T}_\theta^2)} - \left\|\delta_i\delta_j(W_{n})\right\|_{L^1(\mathbb{T}^2)}\right|\right\}<\varepsilon, \ \text{for any } \theta\in[-\theta_0,\theta_0].
		\end{equation}
	\end{lem}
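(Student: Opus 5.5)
The plan is to reduce Lemma \ref{lem3} to Lemma \ref{lem0}, exactly as in the proof of Lemma \ref{lem1}, treating each pair $(i,j)\in\{1,2\}^2$ separately. Since $\delta_1$ and $\delta_2$ commute, only three operators occur: $\delta_1^2$, $\delta_2^2$ and $\delta_{12}$. The case $\delta_2^2$ is already Lemma \ref{lem1}.

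First I check that the Fourier coefficients of $P_{\theta,n}$ depend continuously on $\theta$. Expand the product (\ref{2_4}) as a sum over choices of one of the three terms in each factor (\ref{2_1}). Reordering each resulting word into the normal form $U_\theta^{a}V_\theta^{b}$ uses (\ref{rotation}) finitely many times. Each reordering produces a factor $e^{2\pi i\theta c}$ with $c\in\mathbb{Z}$ independent of $\theta$. Hence every $\hat{P}_{\theta,n}(k)$ is a finite sum of terms $2^{-\|k\|_0}e^{2\pi i\theta c}$; by lacunarity there is in fact a single such term. These functions are continuous (even $1$-periodic) in $\theta\in\mathbb{R}$, and at $\theta=0$ they equal $\hat{P}_n(k)$.

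Next, by (\ref{3})--(\ref{Gn}), or directly from (\ref{Wn}) and (\ref{6}), each $\delta_i\delta_j(W_{\theta,n})$ has coefficient $(2\pi i)^2k_ik_j\cdot\bigl(-\tfrac{1}{4\pi^2}\bigr)k_2^{-2}\hat{P}_{\theta,n}(k)$ at $U_\theta^{k_1}V_\theta^{k_2}$. This is a fixed rational multiple of $\hat{P}_{\theta,n}(k)$, with the index set independent of $\theta$. So each of these three elements is a trigonometric polynomial $\sum_{|m|,|l|\le N}p_{m,l}(\theta)U_\theta^mV_\theta^l$ with continuous coefficients. Lemma \ref{lem0} then gives continuity of $\theta\mapsto\|\delta_i\delta_j(W_{\theta,n})\|_{L^1(\mathbb{T}_\theta^2)}$ at $\theta=0$.

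Lemma \ref{lem0} is stated on $[0,1]$. To handle $\theta\in[-\theta_0,0]$, I use the periodicity remark: the coefficients are $1$-periodic, so continuity at $0$ from the left is continuity at $1$ in $[0,1]$. Alternatively, the proof of Lemma \ref{lem0} works verbatim on any interval. At $\theta=0$, Remark \ref{rem1} identifies $\|\delta_i\delta_j W_{0,n}\|_{L^1(\mathbb{T}_0^2)}$ with $\|\partial_i\partial_j W_n\|_{L^1(\mathbb{T}^2)}$. Under $\phi$, $\delta_j$ corresponds to $\partial/\partial s$ or $\partial/\partial t$, and $W_{0,n}$ corresponds to $W_n$ because the coefficients coincide at $\theta=0$.

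For each of the three operators this yields some $\theta^{(ij)}>0$ with the difference of norms below $\varepsilon$ on $[-\theta^{(ij)},\theta^{(ij)}]$. Taking $\theta_0$ to be the minimum of these finitely many numbers proves (\ref{2}). The only genuinely delicate point is the bookkeeping at $\theta=0$: matching the coefficients of $P_{0,n}$ with those of $P_n$ in (\ref{2_0}). This follows because all phase factors become $1$ when $\theta=0$, so the expansion reduces to the commutative one.
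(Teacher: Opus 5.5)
Your proposal is correct and follows the paper's argument. The paper notes that $\delta_1^2(W_{\theta,n})$ and $\delta_{12}(W_{\theta,n})$ are trigonometric polynomials whose Fourier coefficients depend continuously on $\theta$. It then argues as in Lemma \ref{lem1}, applying Lemma \ref{lem0} and Remark \ref{rem1} at $\theta=0$. Your explicit coefficients $k_ik_jk_2^{-2}\hat{P}_{\theta,n}(k)$, your periodicity argument for negative $\theta$, and taking the minimum over the three operators spell out details the paper leaves implicit.
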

	
	\subsection{Proof of Theorem \ref{main}}\label{Proof}
	
	The proof is actually based on two steps. The first one is that for small values of rotation $\theta$, the non-commutative Riesz products that are defined in (\ref{2_4}) behave similarly to their commutative counterparts. Thus, the result is obtained by Theorem \ref{Com}. The second step makes use of irrationality. More precisely, irrationality guarantees that the non-commutative torus is rich enough so that, for a certain pattern of high frequencies, we can retrieve a kind of ``almost" commutativity. This helps us to reduce our analysis to the first step.
	
	\begin{proof}
		Let $K_1$ and $C_1$ be the constants from Theorem \ref{Com} and define
		\begin{equation*}
			K=K_1+\frac{1}{2} \ \ \text{and } \ C=\frac{C_1}{2}.
		\end{equation*}
		Now let $N$ be an arbitrary natural number and $\{m_j\}_{j\in\mathbb{N}}$ be a sequence of natural numbers that satisfies the growth condition (\ref{growth}). Then Lemma \ref{lem3} implies that for
		\begin{equation*}
			0<\varepsilon<\min\{K_1,\tfrac{1}{2},C\},
		\end{equation*}
		there exists some $\theta_0=\theta_0(N,\varepsilon)\in(0,1)$ such that (\ref{2}) is satisfied. But, by Theorem \ref{Com},
		\begin{equation*}
			\max_{j=1,2}\left\{ \left\| \delta_j^2(W_N) \right\|_{L^1(\mathbb{T}^2)} \right\}\leq K_1 \ \ \text{and } \ \left\| \delta_{12}W_N \right\|_{L^1(\mathbb{T}^2)}\geq C_1N.
		\end{equation*}
		Therefore, (\ref{2_21}) is valid with $Q=W_{\theta,N}$, where the frequencies $\{m_j\}_{j\in\mathbb{N}}$ of the polynomials $W_{\theta,N}$ (as the latter defined in (\ref{Wn})) satisfy the growth condition (\ref{growth}).
		
		It remains to address the case for $\left|\theta\right|\geq\theta_0$. If $\theta$ belongs outside the interval $(-\theta_0,\theta_0)$ and it is irrational, then there exists some natural number $M=M(\theta,N)$, such that
		\begin{equation*}
			\tilde{\theta}\equiv M^2\theta\bmod\mathbb{Z},
		\end{equation*}
		for some irrational $\tilde{\theta}\in(0,\theta_0)$. This is a consequence of Weyl's equidistribution theorem; for a discussion about this topic, see \cite{KuiNie}. Then, for
		\begin{equation*}
			\tilde{U}:=U_\theta^{M} \ \ \text{and } \tilde{V}:=V_\theta^{M},
		\end{equation*}
		the commutation relation (\ref{rotation}) gives that $\tilde{U}$ and $\tilde{V}$ are unitary operators that satisfy
		\begin{equation*}
			\tilde{U}\tilde{V}=e^{2\pi i\tilde{\theta}}\tilde{V}\tilde{U}.
		\end{equation*}
		We define the C$^*$-algebra $\tilde{\mathcal{A}}$ (a sub-algebra of $\mathcal{A}_\theta$) generated by $\tilde{U}$ and $\tilde{V}$. Then the mapping $\rho:(U_{\tilde{\theta}},V_{\tilde{\theta}})\mapsto(\tilde{U},\tilde{V})$ defines a C$^*$-isomorphism between the irrational rotation algebra $\mathbb{T}^2_{\tilde{\theta}}$ and $\tilde{\mathcal{A}}$. Furthermore, the linear functional $\ell:=\tau_\theta\circ\rho$ (where $\tau_\theta$ is the faithful tracial state on $\mathcal{A}_\theta$) defines a faithful tracial state on $\mathcal{A}_{\tilde{\theta}}$. But we have seen (cf. Section \ref{IRA}) that $\mathcal{A}_{\tilde{\theta}}$ has a unique such state; i.e. $\tau_{\tilde{\theta}}$. Therefore, $\ell=\tau_{\tilde{\theta}}$ and thus,
		\begin{equation}\label{1}
			\left\| a \right\|_{L^1(\mathbb{T}_\theta^2)}=\left\| \rho^{-1}(a) \right\|_{L^1(\mathbb{T}_{\tilde{\theta}}^2)}, \ \text{for any } a\in\tilde{\mathcal{A}}.
		\end{equation}
		Choose a sequence of natural numbers $\{m_j\}_{j\in\mathbb{N}}$ that satisfies the growth condition (\ref{growth}) and let
		\begin{equation*}
			\tilde{P}_{\theta,n}:=\left[ I +\frac{1}{2}\left( {\tilde{U}}^{(-1)^{n-1}m_n}{\tilde{V}}^{m_n}+{\tilde{V}}^{-m_n}{\tilde{U}}^{(-1)^n m_n} \right) \right] \tilde{P}_{\theta,n-1}, \ \text{for all } n\geq2,
		\end{equation*}
		where
		\begin{equation*}
			\tilde{P}_{\theta,1}:= I +\frac{1}{2}\left( {\tilde{U}}^{m_1}{\tilde{V}}^{m_1}+{\tilde{V}}^{-m_1}{\tilde{U}}^{-m_1} \right).
		\end{equation*}
		In addition,
		\begin{equation*}
			\tilde{W}_{\theta,n}:=-\left(\frac{1}{2\pi M}\right)^2\sum_{j=1}^n\sum_{k\in M_j}k_2^{-2}\left( \hat{\tilde{P}}_{\theta,n}(k)\tilde{U}^{k_1}\tilde{V}^{k_2}+\hat{\tilde{P}}_{\theta,n}(-k)\tilde{U}^{-k_1}\tilde{V}^{-k_2} \right).
		\end{equation*}
		Then notice that
		\begin{equation*}
			\rho(W_{\tilde{\theta},n})=M^2\tilde{W}_{\theta,n}, \ \text{for any } n\in\mathbb{N},
		\end{equation*}
		where $W_{\tilde{\theta},n}$ are defined in (\ref{Wn}). If $\tilde{\delta}_j$, for $j=1,2$, are the derivations of $\mathbb{T}_{\tilde{\theta}}^2$, then for any natural number $n$,
		\begin{equation*}
			\tilde{\delta}_i\tilde{\delta}_jW_{\tilde{\theta},n}=(\rho^{-1}\circ\delta_i\delta_j)(\tilde{W}_{\theta,n}), \ \text{for } i,j=1,2.
		\end{equation*}
		Then (\ref{1}) gives that
		\begin{equation*}
			\left\| \delta_i\delta_j \tilde{W}_{\theta,n} \right\|_{L^1(\mathbb{T}_\theta^2)}=\left\| \tilde{\delta}_i\tilde{\delta}_j W_{\tilde{\theta},n} \right\|_{L^1(\mathbb{T}_{\tilde{\theta}}^2)}, \ \text{for } i,j=1,2.
		\end{equation*}
		 By defining $Q=\tilde{W}_{\theta,N}$, the result is obtained by the first part of the proof.
	\end{proof}
	
	\begin{remark}
	Notice that in the commutative case, the desired polynomials $Q$ are the polynomials $W_n$. While in that case the growth of the degree of $Q$ is standard (i.e. $O(e^{n^2})$, as $n\to+\infty$, see Remark \ref{rem2}), in the non-commutative setting, as the proof reveals, the degree of the polynomials $Q$ is of order $M(\theta,N)e^{O(N^2)}$, for $N\to+\infty$. Since $M(\theta,N)$ may depend on $N$ in an uncontrolled way, no degree-based quantitative estimate follows from this argument.
	\end{remark}
	
	\section{Concluding remarks}\label{Concl}
	
	First notice that, as we have already mentioned in the introduction, Ornstein's result from \cite{Ornstein} is Theorem 1, which is stated for homogeneous differential operators. Because derivations constitute the analogue of differentiation in rotation algebras, a generalisation of Ornstein's original statement in the non-commutative setting is still possible. Nonetheless, proving such an analogue in its full generality is not immediate or obvious. The reason is that if someone wants to follow Ornstein's original proof, then the localisation techniques that Ornstein uses would create some certain obstacles. Ornstein's theorem in its full generality for the non-commutative torus is still unknown to the authors.
	
	Our approach, motivated by \cite{KazWojc}, gives a counterexample of the inequality only for the case of trigonometric polynomials constructed by finite non-commutative Riesz products. The finiteness of the products is crucial and it actually allows us to extend the results from \cite{KazWojc} to the non-commutative setting and thus, to obtain a non-commutative analogue of \cite[Theorem 2.3]{KazWojc} for the anisotropic case, which also extends Theorem \ref{main}.
	
	But first, for sake of completeness, we give briefly the definition of a $d$-dimensional finite Riesz product on a $d$-dimensional quantum torus. For any natural number $d\ge2$ and any collection of irrational numbers $\{\theta_{kl}\}_{1\leq k<l\leq d}$ from $[0,1]$, let $\Theta$ denote the $d\times d$-skew symmetric matrix
	\begin{equation*}
		\Theta=[\theta_{kl}]_{k,l=1}^d.
	\end{equation*}
	Let $\mathcal{A}_\Theta$ denote the universal C$^*$-algebra generated by unitary operators $\{U_\Theta(j)\}_{j=1}^d$ that satisfy the following commutation relations:
	\begin{equation}\label{9}
		U_\Theta(k)U_\Theta(l)=e^{2\pi i\theta_{kl}}U_\Theta(l) U_\Theta(k), \ \text{for } k,l=1,2,\dots,d.
	\end{equation}
	Then $\mathcal{A}_\Theta$ will be the $d$-dimensional non-commutative or quantum torus; often denoted by $\mathbb{T}_\Theta^d$. If $U_\Theta:=(U_\Theta(1),\dots,U_\Theta(d))$ and for any multi-index $\alpha=(\alpha_1,\dots,\alpha_d)\in\mathbb{Z}^d$, we denote
	\begin{equation*}
		U_\Theta^\alpha:=U_\Theta^{\alpha_1}(1)U_\Theta^{\alpha_2}(2)\dots U_\Theta^{\alpha_d}(d),
	\end{equation*}
	then $\mathcal{A}_\Theta$ is the norm closure of the space of trigonometric polynomials of the form
	\begin{equation*}
		p(U_\Theta)=\sum_{\{ \left|\alpha_j\right|\leq N: \ j=1,\dots,d \}} p_\alpha U_\Theta^\alpha, \ \text{for } N\in\mathbb{N}_0.
	\end{equation*}
	Notice that this simply generalises the case of $d=2$, that we dealt with so far. Similarly, we can define the trace $\tau$ on $\mathcal{A}_\Theta$ by
	\begin{equation*}
		\tau(p)=p_{\mathbf{0}}, \ \text{for } \mathbf{0}:=(0,\dots,0),
	\end{equation*}
	and ensue likewise to build all the expected analogies in terms of Fourier coefficients, $L^p$-norms, etc. Moreover, let the derivations $\{\delta_j\}_{j=1}^d$ be defined by
	\begin{equation*}
		\delta_j(U_\Theta^\alpha):=2\pi i\alpha_j U_\Theta^\alpha, \ \text{for any } \alpha\in\mathbb{Z}^d,
	\end{equation*}
	and for any multi-index $\alpha\in\mathbb{N}_0^d$, the derivation operators
	\begin{equation*}
		D^\alpha:=\delta_1^{\alpha_1}\delta_2^{\alpha_2}\dots\delta_d^{\alpha_d}.
	\end{equation*}
	
	For any natural number $n$, let $\{m_k\}_{k\in\mathbb{N}}$ be a sequence of natural numbers that satisfies the growth condition (\ref{growth}). Let $\{\lambda_j\}_{j=1}^d$ be a collection of natural numbers and $(\xi_1,\dots,\xi_d)$ be a subset of $\{0,1\}^d$. Then, for any $j=1,\dots,d$, define the sequences $\{m_k^j\}_{k\in\mathbb{N}}$ by
	\begin{equation}\label{10}
		m_k^j=(-1)^{\xi_j k}(m_k)^{\lambda_j}, \ \text{for all } k\in\mathbb{N}.
	\end{equation}
	Let
	\begin{equation}\label{11}
		\mathbf{m}_k=(m_k^1,\dots,m_k^d), \ \text{for any } k\in\mathbb{N},
	\end{equation}
	and define the non-commutative Riesz products of $\mathbb{T}_{\Theta}^d$
	\begin{equation*}
		P_{\Theta,n}:=\left[ I+\frac{1}{2}\left( U_\Theta^{\mathbf{m}_n}+(U_\Theta^{\mathbf{m}_n})^{-1} \right) \right]\dots\left[ I+\frac{1}{2}\left( U_\Theta^{\mathbf{m}_1}+(U_\Theta^{\mathbf{m}_1})^{-1} \right) \right], \ \text{for any } n\in\mathbb{N}.
	\end{equation*}
	Then this construction, as in the 2-dimensional case, leads to the following theorem.
	
	\begin{thm}\label{thm0}
		Let $\alpha_1,\alpha_2,\dots,\alpha_m,\beta$ in $\mathbb{N}_0^d$ (where $d\geq2$) be an arbitrary choice of multi-indices that satisfy the following two conditions
		\begin{equation}\label{12}
			\left<\alpha_j,\Lambda\right>=\left<\beta,\Lambda\right>, \ \text{for all } j=1,2,\dots,m,
		\end{equation}
		for some $\Lambda=(\lambda_1,\dots,\lambda_d)\in\mathbb{N}^d$, and
		\begin{equation*}
			\left<\beta,\xi\right>\not\equiv\left<\alpha_1,\xi\right>\bmod2 \ \ \text{and } \ \left<\alpha_j,\xi\right>\equiv\left<\alpha_1,\xi\right>\bmod2, \ \text{for } j=2,\dots,m,
		\end{equation*}
		where $\xi=(\xi_1,\dots,\xi_d)\in\{0,1\}^d$. Then there exist positive constants $K$ and $C$ such that, for any choice of irrational numbers $\{\theta_{kl}\}_{1\leq k<l\leq d}$ of $[0,1]$, such that $1\cup\{\theta_{kl}\}_{1\leq k<l\leq d}$ is a $\mathbb{Q}$-linearly independent set, and any natural number $N$, there exists a trigonometric polynomial $Q$ of $\mathbb{T}_\Theta^d$, that depends on $N$, such that
		\begin{equation*}
			\sum_{j=1}^m \left\| D^{\alpha_j}Q \right\|_{L^1(\mathbb{T}_\Theta^d)}\leq K \ \ \text{and } \ \left\| D^\beta Q \right\|_{L^1(\mathbb{T}_\Theta^d)}\geq CN.
		\end{equation*}
	\end{thm}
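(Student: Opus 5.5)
The plan is to repeat the two-step argument used for Theorem \ref{main}: first handle small deformation matrices $\Theta$ by perturbation from the commutative anisotropic result, then reduce an arbitrary admissible $\Theta$ to a small one by passing to a subalgebra generated by powers of the generators.

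\textbf{Commutative input and perturbation.} We take from \cite[Theorem 2.3]{KazWojc} (the anisotropic version of Theorem \ref{Com}) the commutative statement. Under (\ref{12}) and the parity conditions on $\xi$, the modified Riesz products $W_n$ built from the frequencies $\mathbf{m}_k$ in (\ref{10})--(\ref{11}) satisfy
\[
\sum_j\|D^{\alpha_j}W_n\|_{L^1(\mathbb{T}^d)}\le K_1
\quad\text{and}\quad
\|D^\beta W_n\|_{L^1(\mathbb{T}^d)}\ge C_1 n .
\]
Here $W_n$ is obtained from $P_n-1$ by dividing each Fourier coefficient $\hat P_n(k)$ by the $\langle\beta,\Lambda\rangle$-homogeneous symbol attached to the frequency block. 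We define the non-commutative counterpart $W_{\Theta,n}$ from the coefficients of $P_{\Theta,n}$ exactly as in (\ref{Wn}). Each factor of $P_{\Theta,n}$ is a finite sum of monomials. Reordering products of the monomials $U_\Theta^{\alpha}$ via (\ref{9}) therefore only produces unimodular factors $e^{2\pi i\sum c_{kl}\theta_{kl}}$ with integer $c_{kl}$. Consequently every $D^{\gamma}W_{\Theta,n}$ is a trigonometric polynomial of fixed finite support whose coefficients depend continuously on $\Theta\in[0,1]^{d(d-1)/2}$.

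The proof of Lemma \ref{lem0} works verbatim for several parameters. The reason is that the coefficient $c_{\mathbf 0}(\Theta)$ of $Q(P^*P)$ is again a continuous function of $\Theta$, and $\mathbb{T}^d_0\cong C(\mathbb{T}^d)$ as in Remark \ref{rem1}. Hence, for fixed $N$ and $\varepsilon$, there is $\theta_0>0$ such that every $\|D^\gamma W_{\Theta,N}\|_{L^1}$ lies within $\varepsilon$ of its commutative value whenever $\max|\theta_{kl}|\le\theta_0$. Choosing $\varepsilon$ as in the proof of Theorem \ref{main} gives the claim for such $\Theta$ with $K=K_1+\tfrac12$ and $C=C_1/2$.

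\textbf{Reduction to small $\Theta$.} For a general $\Theta$, choose natural numbers $M_1,\dots,M_d$ and set $\tilde U(k)=U_\Theta(k)^{M_k}$. These satisfy (\ref{9}) with $\tilde\theta_{kl}=M_kM_l\theta_{kl}$. The simplest choice is $M_k=M$ for all $k$, which gives $\tilde\theta_{kl}=M^2\theta_{kl}$. The $\mathbb{Q}$-linear independence of $\{1\}\cup\{\theta_{kl}\}$ lets Weyl's theorem apply to the vector $(\theta_{kl})$: the sequence $(M^2\theta_{kl})_{k<l}$ is equidistributed modulo $1$ in the torus $\mathbb{T}^{d(d-1)/2}$. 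So some $M$ makes every $\tilde\theta_{kl}$, taken modulo $\mathbb{Z}$ with a representative in $(-\tfrac12,\tfrac12]$, lie in $[-\theta_0,\theta_0]$.

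Moreover, $\{1\}\cup\{M^2\theta_{kl}\}$ is still $\mathbb{Q}$-linearly independent. This is the standard condition under which $\mathcal{A}_{\tilde\Theta}$ is simple with a unique tracial state, and under which the canonical map $\rho:\mathcal{A}_{\tilde\Theta}\to C^*(\tilde U)$ is a $*$-isomorphism. Alternatively, the trace can be checked directly: $\tau_\Theta\circ\rho$ sends each nontrivial monomial to $0$ and hence coincides with $\tau_{\tilde\Theta}$ on polynomials. That suffices, since it gives $\|\rho(a)\|_{L^1}=\|a\|_{L^1}$ on polynomials by the same functional-calculus approximation.

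Define $Q=\rho(W_{\tilde\Theta,N})$, rescaled so that $D^\gamma\circ\rho=\rho\circ D^\gamma$ up to constants. Each $\delta_k$ picks up a factor $M_k$, so $D^\gamma Q=\prod_k M_k^{\gamma_k}\,\rho(D^\gamma W_{\tilde\Theta,N})$. With equal $M_k=M$ this factor is $M^{|\gamma|}$. The orders $|\alpha_j|$ and $|\beta|$ may differ in the anisotropic case, so the factor must not be absorbed blindly.

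\textbf{Main obstacle.} The expected main obstacle is exactly this scaling mismatch together with the choice of exponents. The fix I would try first is $M_k=M^{\lambda_k}$. Then $D^\gamma$ scales by $M^{\langle\gamma,\Lambda\rangle}$, which is the same for all $\alpha_j$ and $\beta$ by (\ref{12}), and dividing $Q$ by this common factor restores the bounds. The exponents of $\tilde\theta_{kl}$ become $M^{\lambda_k+\lambda_l}\theta_{kl}$, with differing powers of $M$. Simultaneous smallness of these modulo $1$ requires a Weyl-type equidistribution of the polynomial vector $(M^{\lambda_k+\lambda_l}\theta_{kl})_{k<l}$. This holds by Weyl's theorem for polynomial sequences, because each coordinate has an irrational leading coefficient and distinct coordinates are $\mathbb{Q}$-independent, which is where the independence hypothesis is used. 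A finite equidistribution argument then yields a suitable $M=M(\Theta,N)$, and the proof concludes as in Theorem \ref{main}.
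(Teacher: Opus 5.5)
Your proposal is correct and follows essentially the same route as the paper: perturbation from the commutative result of \cite{KazWojc} via a multi-parameter version of Lemma \ref{lem0}, then passage to the subalgebra generated by $U_\Theta(k)^{M^{\lambda_k}}$, with $M$ chosen by Weyl equidistribution of $(M^{\lambda_k+\lambda_l}\theta_{kl})_{k<l}$ and $Q=M^{-\langle\beta,\Lambda\rangle}\rho(W_{\tilde\Theta,N})$, which is exactly the paper's choice. The only minor difference is that you check directly that $\tau_\Theta\circ\rho$ agrees with $\tau_{\tilde\Theta}$ on monomials instead of appealing to uniqueness of the trace; this is a slightly more elementary way to obtain $\|\rho(a)\|_{L^1}=\|a\|_{L^1}$.
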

	
	\noindent Since the proof follows the logic of the proof of Theorem \ref{main}, for the reader's convenience, we only give a sketch of it.
	
	\begin{proof}
	For any natural number $n$ and any $\left\{\theta_{kl}\right\}_{1\leq k<l\leq d}\subseteq[0,1]$, define the polynomials
	\begin{equation}\label{WN}
		W_{\Theta,n}:=\left(2\pi i\right)^{-\left|\alpha_m\right|}\sum_{j=1}^n\sum_{k\in M_j\cup(-M_j)}k^{-\alpha_m}  \hat{P}_{\Theta,n}(k)U_\Theta^{k},
	\end{equation}
	where
	\begin{equation*}
		M_j=\left\{ k\in\mathbb{Z}^d: \ k=\mathbf{m}_j+\sum_{l=1}^{j-1}\varepsilon_l\mathbf{m}_l, \ \varepsilon_l\in\{0,\pm1\} \right\}.
	\end{equation*}
	Notice that
	\begin{equation}\label{7}
		D^{\alpha_m}W_{\Theta,n}=P_{\Theta,n}-1
	\end{equation}
	and, for any $\alpha\in\{\alpha_1,\dots,\alpha_{m-1},\beta\}$,
	\begin{equation}\label{8}
		D^{\alpha}W_{\Theta,n}=\left(2\pi i\right)^{\left|\alpha\right|-\left|\alpha_m\right|}\sum_{j=1}^n\sum_{k\in M_j\cup(-M_j)}k^{\alpha-\alpha_m} \hat{P}_{\Theta,n}(k)U_\Theta^{k}.
	\end{equation}
	If $\Theta$ is the zero matrix, \cite[Theorem 2.3]{KazWojc} gives that there exist positive constants $K_1$ and $C_1$ such that, for any natural number $N$,
	\begin{equation*}
		\sum_{j=1}^m \left\| D^{\alpha_j}W_N \right\|_{L^1(\mathbb{T}^d)}\leq K_1 \ \ \text{and } \ \left\| D^\beta W_N \right\|_{L^1(\mathbb{T}^d)}\geq C_1N.
	\end{equation*}
	By the commutation relation (\ref{9}), as well as (\ref{7}) and (\ref{8}), for any $\alpha\in\{\alpha_j\}_{j=1}^m\cup\{\beta\}$, the polynomials $D^\alpha W_{\Theta,n}$ have Fourier coefficients that depend continuously on $\Theta$. By a multidimensional analogue of Lemma \ref{lem0}, we obtain some positive constants $K$ and $C$, such that for any natural number $N$, there exists some $\theta_0\in(0,1)$ (that depends on the constants and $N$) such that, for any skew symmetric matrix $\Theta=[\theta_{kl}]_{k,l=1}^d$, where $\theta_{kl}\in(-\theta_0,\theta_0)$,
	\begin{equation*}
		\sum_{j=1}^m \left\| D^{\alpha_j}W_{\Theta,N} \right\|_{L^1(\mathbb{T}_\Theta^d)}\leq K \ \ \text{and } \ \left\| D^\beta W_{\Theta,N} \right\|_{L^1(\mathbb{T}_\Theta^d)}\geq CN.
	\end{equation*}
	
	The interesting part is when there exists some irrational entry of $\Theta$ that does not belong to $(-\theta_0,\theta_0)$. As we did in Theorem \ref{main}, we look for a suitable natural number $M$ such that the new sequence $m'_j=Mm_j$ reduces us to the first step. By noticing that then the relation (\ref{10}) and (\ref{11}) become, respectively,
	\begin{equation*}
		{m'}_k^j=(-1)^{\xi_jk}M^{\lambda_j}(m_k)^{\lambda_j}, \ \text{for } k\in\mathbb{N} \ \text{and } j=1,\dots,d,
	\end{equation*}
	and
	\begin{equation*}
		{\mathbf{m}'}_k=({m'}_k^1,\dots,{m'}_k^d),\ \text{for } k\in\mathbb{N},
	\end{equation*}
	we observe that, for $1\leq j<l\leq d$,
	\begin{equation*}
		\tilde{U}_j^{m_k^{\lambda_j}} \tilde{U}_l^{m_k^{\lambda_l}}=e^{2\pi i\tilde{\theta}_{jl}m_k^{\lambda_l+\lambda_j}}\tilde{U}_l^{m_k^{\lambda_l}}\tilde{U}_j^{m_k^{\lambda_j}},
	\end{equation*}
	where
	\begin{equation*}
		\tilde{U}_s=U_\Theta^{M^{\lambda_s}}(s), \ \text{for } \ s=j,l \ \ \text{and } \ \tilde{\theta}_{jl}\equiv M^{\lambda_j+\lambda_l}\theta_{jl}\bmod\mathbb{Z}.
	\end{equation*}
	So we wish to find a natural number $M$ such that $\left\{\tilde{\theta}_{jl}\right\}_{1\leq j<l\leq d}\subseteq(0,\theta_0)$. This is possible if the multidimensional sequence
	\begin{equation*}
		\left( M^{\lambda_1+\lambda_2}\theta_{12}, M^{\lambda_1+\lambda_3}\theta_{13},\dots,M^{\lambda_1+\lambda_d}\theta_{1d},M^{\lambda_2+\lambda_3}\theta_{23},\dots,M^{\lambda_{d-1}+\lambda_d}\theta_{d-1,d} \right)
	\end{equation*}
	is uniformly distributed $\bmod\mathbb{Z}$ in $\mathbb{R}^r$, for $r=\tfrac{d(d-1)}{2}$. The latter is guaranteed by the $\mathbb{Q}$-linear independence condition and \cite[Theorem 1.6.4]{KuiNie}. Then the rest of the proof follows the steps of Theorem \ref{main}. In fact, if $\tilde{\Theta}=[\tilde{\theta}_{kl}]_{k,l=1}^d$ is the new skew symmetric matrix obtained from $\left\{\tilde{\theta}_{kl}\right\}_{kl=1}^d\subseteq(-\theta_0,\theta_0)$, $\tilde{\mathcal{A}}$ is the C$^*$-algebra generated by the unitaries $\{\tilde{U}_j\}_{j=1}^d$ and $\rho:\mathbb{T}_{\tilde{\Theta}}^d\to\tilde{\mathcal{A}}$ is the C$^*$-isomorphism such that $U_{\tilde{\Theta}}(j)\mapsto\tilde{U}_j$, for $j=1,\dots,d$, then the desired polynomial $Q$ will be given by
	\begin{equation*}
		Q=M^{-\left<\alpha,\Lambda\right>}\rho(W_{\tilde{\Theta},N}),
	\end{equation*}
	where $W_{\tilde{\Theta},N}$ is defined in (\ref{WN}) and $\alpha$ can be any of $\alpha_1,\dots,\alpha_m,\beta$ because of (\ref{12}).
	\end{proof}

	\begin{remark}
		Notice that Theorem \ref{main} is a special case of Theorem \ref{thm0} for $\alpha_1=(2,0)$, $\alpha_2=(0,2)$, $\beta=\Lambda=(1,1)$ and $\xi=(1,0)$.
	\end{remark}
	
	We close this section by noticing that there exists a stronger version of Lemma \ref{lem0} in the sense of Proposition \ref{prop0}. We clarify that this version is not necessary for proving Theorem \ref{main}, but it might be of independent interest. The proposition is motivated by the following aspects of continuity. The collection of the rotation algebras $\left\{\mathcal{A}_\theta\right\}_{\theta\in[0,1]}$ forms a continuous filed, in the sense that there exists a C$^*$-algebra $\mathcal{A}$ and surjective C$^*$-homomorphisms $\pi_\theta:\mathcal{A}\to\mathcal{A}_\theta$ such that the mappings $\theta\mapsto\left\| \pi_\theta(a) \right\|$ are continuous for any $a\in\mathcal{A}$ (cf. \cite{Elliott}). Therefore, the rotation algebras form a continuous field. Haagerup and R\o{}rdam prove in \cite{HaagRor} a stronger version of continuity. They show that for any infinite-dimensional separable Hilbert space $\mathcal{H}$, there exist a pair of continuous functions $U,V:[0,1]\to\mathbf{B}(\mathcal{H})$ (where $\mathbf{B}(\mathcal{H})$ is the set of bounded operators on $\mathcal{H}$), such that, for any $\theta\in[0,1]$, $U(\theta)$ and $V(\theta)$ are unitaries and canonical generators of the rotation algebra $\mathcal{A}_\theta$. Motivated by this, we prove the Proposition \ref{prop0}. Nonetheless, notice that the proposition describes a more general situation, in the sense that it applies to any choice of a common representing Hilbert space and generator functions $U$ and $V$, even if the latter are not continuous on $[0,1]$.
	
	\begin{prop}\label{prop0}
	Let $\mathcal{H}$ be an infinite-dimensional separable Hilbert space and $U$ and $V$ are functions on $[0,1]$, taking values in $\mathbf{B}(\mathcal{H})$ such that $U(\theta)$ and $V(\theta)$ are the canonical generators of the rotation algebra $\mathcal{A}_\theta$. If $\mathcal{A}$ is the following C$^*$-algebra
	\begin{equation}\label{13}
		\mathcal{A}=C^*\left( C([0,1])I,U,V \right)\subseteq\ell^\infty([0,1],\mathbf{B}(\mathcal{H})),
	\end{equation}
	then, for any $a\in\mathcal{A}$, the following mapping is continuous:
	\begin{equation*}
		\theta\mapsto\tau_\theta(a(\theta)),
	\end{equation*}
	where $\tau_\theta$ is the canonical trace on $\mathcal{A}_\theta$ and $a(\theta)$ is the value of the projection map $\pi_\theta:\mathcal{A}\to\mathcal{A}_\theta$ on $a$.
	\end{prop}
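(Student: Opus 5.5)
The strategy is a density argument: first prove continuity on a dense $*$-subalgebra of $\mathcal{A}$, then pass to all of $\mathcal{A}$ through a uniform estimate. Let $\mathcal{A}_0$ be the $*$-algebra generated algebraically by $C([0,1])I$, $U$ and $V$. It is dense in $\mathcal{A}$ for the norm of $\ell^\infty([0,1],\mathbf{B}(\mathcal{H}))$, that is, the sup over $\theta$ of the operator norms.

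First we put elements of $\mathcal{A}_0$ in normal form. Fix $a\in\mathcal{A}_0$. Scalar functions $f(\theta)I$ commute with $U$ and $V$, and in each fibre the relation $U(\theta)V(\theta)=e^{2\pi i\theta}V(\theta)U(\theta)$ holds pointwise, together with $U(\theta)^*=U(\theta)^{-1}$ and $V(\theta)^*=V(\theta)^{-1}$. Using these, any finite word in $U,V,U^*,V^*$ and functions can be reordered as
\begin{equation*}
a(\theta)=\sum_{|m|,|n|\leq N} c_{m,n}(\theta)\,U(\theta)^mV(\theta)^n,
\end{equation*}
where each $c_{m,n}$ is a finite sum of products of continuous functions with factors $e^{2\pi i k\theta}$, $k\in\mathbb{Z}$. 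Each such factor is continuous in $\theta$, so every $c_{m,n}$ lies in $C([0,1])$. By definition of the canonical trace (\ref{trace1}), transported through the canonical isomorphism between $C^*(U(\theta),V(\theta))$ and $\mathcal{A}_\theta$, we get $\tau_\theta(a(\theta))=c_{0,0}(\theta)$, which is continuous. There is one subtlety at $\theta$ rational, and at $\theta=0$ in particular. There the fibre $C^*(U(\theta),V(\theta))$ might a priori be a proper quotient of the universal algebra, and then $\tau_\theta$ has to be interpreted via the hypothesis that $U(\theta),V(\theta)$ are the canonical generators of $\mathcal{A}_\theta$. I take that hypothesis to mean exactly that the fibre is $\mathcal{A}_\theta$ with its canonical trace, so that $\tau_\theta(U(\theta)^mV(\theta)^n)=\delta_{(m,n),\mathbf{0}}$. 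This also shows that $c_{0,0}(\theta)$ does not depend on which reordering was chosen.

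Second, we extend to all of $\mathcal{A}$. Let $a\in\mathcal{A}$ and $\varepsilon>0$, and pick $b\in\mathcal{A}_0$ with $\sup_\theta\|a(\theta)-b(\theta)\|<\varepsilon/3$. Each $\tau_\theta$ is a state, so $|\tau_\theta(a(\theta))-\tau_\theta(b(\theta))|\leq\|a(\theta)-b(\theta)\|<\varepsilon/3$ uniformly in $\theta$. The standard $\varepsilon/3$ argument then gives continuity of $\theta\mapsto\tau_\theta(a(\theta))$ at each $\theta_0$, since it is a uniform limit of continuous functions. We also need that $\pi_\theta$, the evaluation at $\theta$, maps $\mathcal{A}$ into $\mathcal{A}_\theta$. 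Evaluation is a contractive $*$-homomorphism and sends the generators into $C^*(U(\theta),V(\theta))=\mathcal{A}_\theta$, which is closed, so this holds.

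I expect the main obstacle to be the well-definedness and identification of $\tau_\theta$ on each fibre, rather than the analysis. For irrational $\theta$ this is automatic, by universality and simplicity together with uniqueness of the trace. For rational $\theta$ one must use the hypothesis that the given $U(\theta),V(\theta)$ generate a copy of the universal $\mathcal{A}_\theta$, so that the canonical trace, with $\tau_\theta(U^mV^n)=0$ for $(m,n)\neq\mathbf{0}$, is available and bounded there. The rest is bookkeeping of the coefficients produced by commuting $U$ past $V$, which is the same mechanism already used in Lemma \ref{lem0}.
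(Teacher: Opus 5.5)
Your proposal is correct and takes the same route as the paper. You first establish continuity for polynomials with continuous coefficients, where $\tau_\theta(P(\theta))=p_{0,0}(\theta)$, and then extend to all of $\mathcal{A}$ by density via $|\tau_\theta(a(\theta))-\tau_\theta(b(\theta))|\le\|a-b\|$; your additional remarks (the continuous factors $e^{2\pi i k\theta}$ from reordering, and reading the hypothesis at rational $\theta$ as identifying the fibre with the universal $\mathcal{A}_\theta$ and its canonical trace) only make explicit what the paper leaves implicit.
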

	
	\begin{proof}
	Notice that the algebra $\mathcal{A}$ in (\ref{13}) is the norm closure of the polynomials in the space $\ell^\infty([0,1],\mathbf{B}(\mathcal{H}))$. So,
	\begin{equation*}
		\left\| a \right\|=\sup_{\theta\in[0,1]}\left\| a(\theta) \right\|, \ \text{for any } a\in\mathcal{A}.
	\end{equation*}
	We first prove the result for any polynomial $P$ of $\mathcal{A}$ of the form
	\begin{equation*}
		P(\theta)=\sum_{(m,n)\in F}p_{m,n}(\theta)U^m(\theta)V^n(\theta),
	\end{equation*}
	where $F$ is a finite subset of $\mathbb{Z}^2$. Then the result is obtained due to density, since, for any $a,b\in\mathcal{A}$,
	\begin{equation*}
		\left| \tau_\theta(a(\theta))-\tau_\theta(b(\theta)) \right|\leq\left\| a(\theta)-b(\theta) \right\|\leq\left\| a-b \right\|, \ \text{for any } \theta\in[0,1].
	\end{equation*}
	For polynomials the result is obvious by simply noticing that
	\begin{equation*}
		\tau_\theta(P(\theta))=p_{0,0}(\theta)\in C([0,1]).
	\end{equation*}
	\end{proof}
	
	\begin{remark}
	Notice that in the Haagerup-R\o{}rdam case, the algebra $\mathcal{A}$, defined in (\ref{13}), is the continuous field of rotation algebras, which is a C$^*$-subalgebra of the space of continuous functions on $[0,1]$ taking values in $\mathbf{B}(\mathcal{H})$.
	\end{remark}
	
	\noindent As an immediate corollary to Proposition \ref{prop0}, we state the following.
	
	\begin{cor}
	Let $\mathcal{H}$ be an infinite-dimensional separable Hilbert space and $U$ and $V$ are functions on $[0,1]$, taking values in $\mathbf{B}(\mathcal{H})$ such that $U(\theta)$ and $V(\theta)$ are the canonical generators of the rotation algebra $\mathcal{A}_\theta$. If $\mathcal{A}$ is the C$^*$-algebra described in (\ref{13}), then the following mapping is continuous, for any $p>0$:
	\begin{equation*}
		\theta\mapsto\left\| a(\theta) \right\|_{L^p(\mathbb{T}_\theta^2)}, \ \text{for any } a\in\mathcal{A}.
	\end{equation*}
	\end{cor}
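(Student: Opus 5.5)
The argument will combine Proposition \ref{prop0}, which gives continuity of $\theta\mapsto\tau_\theta(b(\theta))$ for every $b\in\mathcal{A}$, with the functional calculus inside the C$^*$-algebra $\mathcal{A}$. Fix $a\in\mathcal{A}$ and $p>0$. Since
\begin{equation*}
\left\| a(\theta) \right\|_{L^p(\mathbb{T}_\theta^2)}=\tau_\theta\left(\left|a(\theta)\right|^p\right)^{1/p}
\end{equation*}
and $t\mapsto t^{1/p}$ is continuous on $[0,+\infty)$, it suffices to show that $\theta\mapsto\tau_\theta(|a(\theta)|^p)$ is continuous.

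Let $f_p(t)=t^{p/2}$, which is continuous on $[0,+\infty)$ for every $p>0$. Consider $b:=f_p(a^*a)$, defined by continuous functional calculus in the unital C$^*$-algebra $\mathcal{A}$; it is unital because $C([0,1])I$ contains $I$. Then $b\in\mathcal{A}$.

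The key step is to check that
\begin{equation*}
\pi_\theta(b)=f_p(\pi_\theta(a)^*\pi_\theta(a))=\left|a(\theta)\right|^p.
\end{equation*}
This holds because $\pi_\theta$ is a unital $\ast$-homomorphism (evaluation at $\theta$), and $\ast$-homomorphisms commute with continuous functional calculus. One first verifies this for polynomials in $a^*a$, and then passes to the limit by uniform approximation of $f_p$ on $[0,\|a\|^2]$ (Weierstrass), using $\|\pi_\theta(c)\|\le\|c\|$. Note that $\sigma(\pi_\theta(a^*a))\subseteq\sigma(a^*a)\subseteq[0,\|a\|^2]$, so the same approximating polynomials serve for all $\theta$ at once. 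The only point that needs care here is to know that evaluation at $\theta$ restricted to $\mathcal{A}$ is a contractive $\ast$-homomorphism onto (or into) $\mathcal{A}_\theta$. This is immediate from the sup-norm description of $\mathcal{A}$ in the proof of Proposition \ref{prop0}.

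Applying Proposition \ref{prop0} to $b$ then shows that $\theta\mapsto\tau_\theta(|a(\theta)|^p)$ is continuous on $[0,1]$, and composing with $t\mapsto t^{1/p}$ gives the claim. I expect no genuine obstacle; the argument is essentially the proof of Lemma \ref{lem0} carried out once at the level of $\mathcal{A}$ instead of fibrewise.
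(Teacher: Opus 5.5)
Your proposal is correct and follows the same route as the paper: form $|a|^p=(a^*a)^{p/2}\in\mathcal{A}$ by continuous functional calculus, note that evaluation $\pi_\theta$ intertwines the functional calculus so that $|a|^p(\theta)=|a(\theta)|^p$, and apply Proposition \ref{prop0}. You also justify two points the paper leaves implicit: why $\pi_\theta$ commutes with the functional calculus (polynomial approximation on $[0,\|a\|^2]$ together with $\|\pi_\theta(c)\|\le\|c\|$), and the final composition with the continuous map $t\mapsto t^{1/p}$.
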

	
	\begin{proof}
	Notice that
	\begin{equation*}
		\left\| a(\theta) \right\|_{L^p(\mathbb{T}_\theta^2)}=\left( \tau_\theta(\left| a(\theta) \right|^p) \right)^{1/p}, \ \text{for any } a\in\mathcal{A}.
	\end{equation*}
	By continuous functional calculus, $\left| a \right|^p$ belongs to $\mathcal{A}$ and moreover,
	\begin{equation*}
		\left| a \right|^p(\theta)=\left| a(\theta) \right|^p\in\mathcal{A}_\theta.
	\end{equation*}
	Then the result is obtained by applying Proposition \ref{prop0}.
	\end{proof}

	\bibliographystyle{plain}
	\bibliography{notes-bibl_rev}
	
\end{document}